\documentclass[11pt,reqno]{amsart}
\usepackage{amsthm}
\usepackage{amssymb}
\usepackage{latexsym}
\usepackage{multicol}
\usepackage{verbatim,enumerate}
\usepackage{amsmath, amscd}
\usepackage[latin1]{inputenc}

\advance\textwidth by 1.2in \advance\oddsidemargin by -.6in \advance\evensidemargin by -.6in
\parskip=.1cm
\usepackage{tikz}
\usetikzlibrary{decorations.markings}
\tikzstyle{vertex}=[ circle, fill, draw, inner sep=0pt, minimum size=4pt,]
\tikzstyle{edge}= [thick]

\usepackage{amsmath,amsthm,amsfonts,amssymb,amscd}
\usepackage{mathrsfs}

\newtheorem{thm}{Theorem}[section]
\newtheorem{prop}[thm]{Proposition}
\newtheorem{cor}[thm]{Corollary}

\newtheorem{lemma}[thm]{Lemma}

\theoremstyle{definition}

\newtheorem{rem}[thm]{Remark}

\numberwithin{equation}{section}

\def\cal#1{\text{$\mathcal{#1}$}}
\def\ord#1^#2{#1$^{\text{#2}}$}
\def\lie#1{\mathfrak{#1}}
\def\tlie#1{\widetilde{\mathfrak{#1}}}
\def\hlie#1{\widehat{\mathfrak{#1}}}
\def\uq#1{\text{$U_q(\lie #1)$}}
\def\uqr#1^#2{\text{$U_q^{#2}(\lie #1)$}}
\def\uqt#1{\text{$U_q(\tlie #1)$}}

\def\uqhr#1^#2{\text{$U_q^{#2}(\hlie #1)$}}
\def\us#1^#2{\text{$U_{\xi}^{#2}(\lie #1)$}}
\def\ush#1^#2{\text{$U_{\xi}^{#2}(\hlie #1)$}}
\def\dus#1^#2{\text{$\dot{U}_{\xi}^{#2}(\lie #1)$}}
\def\dush#1^#2{\text{$\dot{U}_{\xi}^{#2}(\hlie #1)$}}

\def\gbr#1{{\mbox{\boldmath ${\rm #1}$}}}

\def\wt{{\rm wt}}

\def\gr{{\rm gr}}

\def\opl_#1^#2{\text{\scriptsize$\bigoplus\limits_{\text{\normalsize$#1$}}^{\text{\normalsize$#2$}}$}}
\def\otm_#1^#2{\text{\scriptsize$\bigotimes\limits_{\text{\footnotesize$#1$}}^{\text{\footnotesize$#2$}}$}}

\def\bs#1{\boldsymbol{#1}}
\def\ol#1{\overline{#1}}
\def\hcal#1{\widehat{\mathcal{#1}}}
\def\tcal#1{\widetilde{\mathcal{#1}}}

\newcommand {\Hom}{\operatorname{Hom}}
\newcommand {\ev}{\operatorname{ev}}
\newcommand{\g}{\mathfrak{g}}
\newcommand{\h}{\mathfrak{h}}
\newcommand{\n}{\mathfrak{n}}

\newcommand{\C}{\mathbb{C}}

\newcommand{\Z}{\mathbb{Z}}

\newcommand{\A}{\mathbb{A}}

\renewcommand{\thefootnote}

\begin{document}

\title[Graded Limits of tensor product of KR-modules]{Graded limits of simple tensor product of Kirillov-Reshetikhin modules for $U_q(\tlie {sl}_{n+1})$}
\author[Matheus Brito and Fernanda Pereira
]{Matheus Brito and Fernanda Pereira}
\thanks{M.B. was partially supported by FAPESP grant 2010/19458-9 and F.P. was partially supported by FAPESP grant 2009/16309-5.}

\address{Departamento de Matemática, Universidade Estadual de Campinas, Campinas - SP - Brazil, 13083-859.}
\email{ra045329@ime.unicamp.br, matheus.bb@gmail.com}

\address{Departamento de Matemática, Divisão de Ciências Fundamentais, Instituto Tecnológico de Aeronáutica, São José dos Campos - SP - Brazil, 12228-900.}
\email{fpereira@ita.br}

\keywords{Kirillov-Reshetikhin modules, graded limits, fusion
product}

\subjclass[2010]{17B10, 81R50}

\begin{abstract}
We study graded limits of simple $U_q(\tlie {sl}_{n+1})$-modules which are isomorphic to tensor products of Kirillov-Reshetikhin modules associated to a fixed fundamental weight. We prove that every such module admits a graded limit which is isomorphic to the fusion product of the graded limits of its tensor factors. Moreover, using recent results of Naoi, we exhibit a set of defining relations for these graded limits.
\end{abstract}

\maketitle

\setcounter{section}{0}

\section*{Introduction}

Let $\g$ be a finite-dimensional complex simple Lie algebra, $\tlie
g = \lie g\otimes\C[t,t^{-1}]$ the corresponding loop algebra, and
$U_q(\g)$, $U_q(\tlie g)$ their Drinfeld--Jimbo quantum groups over
$\C(q)$, where $q$ is an indeterminate.
Despite the classification of the simple $U_q(\tlie g)$-modules
being known since \cite{cp:qaa} and the great development of the
theory ever since, several other basic questions about the structure
of these representations remain essentially unanswered. One of the
methods which have been used to study the structure of simple
$U_q(\tlie g)$-modules is to understand the classical limit of these
representations, i.e., their specialization at $1$ of the quantum
parameter $q$, and regard it as a representation for the current
algebra $\g[t] = \g\otimes \C[t]$. This approach
was first considered in \cite{cp:weyl}, where the authors proved
necessary and sufficient condition of the existence of the limit.
The notion of the graded limit of a $U_q(\tlie g)$-module was then further
developed in \cite{ch:fer,cm:kr}.

In \cite{cha:minr2}, Chari introduced an important class of
finite--dimensional $U_q(\tlie g)$-modules called minimal
affinizations. The Kirillov--Reshetikhin modules are the minimal affinizations of simple modules whose highest weights are multiples of a fundamental weight. In \cite{ch:fer,cm:kr}, the authors proved that the Kirillov--Reshetikhin modules admit graded limits, described a set of defining relations for them and computed their graded characters. The graded limits of
general minimal affinizations were first studied in \cite{mou:res}
where it was conjectured a set of defining relations for them. Using the theory of Demazure modules, the conjecture was
established in \cite{na:dem, na:D} for $\lie g$ of classical type and in
\cite{ln:ming2} for type $G_2$.  It was also partially established for type
$E_6$ in \cite{mope:mine6}.

It is not hard to see that if $V$ and $W$ are simple $U_q(\tlie
g)$-modules, then the classical limit of $V \otimes W$ is not
isomorphic to the tensor product of the classical limits of $V$ and $W$. In \cite{FL}, Feigin and Loktev
introduced the notion of the fusion product of graded representations
of the current algebra. It was proved in \cite{CL06, FoL, na:weyl} that a
local Weyl module for $\lie g[t]$ is isomorphic to a fusion product of fundamental local Weyl modules. On the other hand, it was known that the quantum local Weyl modules are isomorphic to tensor products of  fundamental local Weyl modules (see \cite[Section 7.4]{cm:qblock}).
Therefore, it follows that the graded limit of a tensor product of quantum local Weyl modules is isomorphic to the fusion product of the graded limits of the corresponding factors. This result motivates the following question: Is it true that if a module $V$ for $U_q(\tlie g)$ is isomorphic to a tensor product $V_1\otimes V_2$ and all three modules admit graded limits, then the graded limit of $V$ is isomorphic to the fusion product of the graded limits of $V_1 $ and $V_2$? There are a few positive answers for this question in the case that $V$ belongs to certain subclasses of simple modules. See for instance \cite[Section 4 and 5]{CV14} where $V$ is isomorphic to particular tensor products of  Kirillov--Reshetikhin modules and \cite{bcm} for modules whose prime factors belong to a special subcategory of modules considered by Hernandez and Leclerc in \cite{HL10, HL13}.

In this paper we prove that the answer to the above question is positive in the case that $\lie g$ is of type $A$ and $V$ is a tensor product of Kirillov--Reshetikhin modules associated to an arbitrary fixed fundamental weight (Theorem \ref{t:main}). Moreover, as consequence of the results of \cite{Nao15}, this $\g[t]$-module is isomorphic to a
generalized Demazure module and a $\g[t]$-module given by generators
and relations (Corollary \ref{c:gradrel}).

The paper is organized as follows. In Section 1 we give some
background information about Lie algebras and their representations. In Section 2 we briefly recall some relevant facts about the finite-dimensional representations of quantum loop algebras. In Section 3 we state and prove our main results.

\hfill

{\bf Acknowledgements:} The authors would like to express their gratitude to V. Chari and A. Moura for helpful discussions.

\section{Lie algebras}

Throughout the paper, let $\mathbb C, \mathbb Z,\mathbb Z_{\ge m}$
denote the sets of complex numbers, integers and integers bigger than or
equal to $m$, respectively. Given a ring $\gbr R$, the underlying
multiplicative group of units is denoted by $\gbr R^\times$. Given any complex Lie algebra $\lie a$ we let $U(\lie a)$ be the universal enveloping algebra of $\lie a$.

\subsection{Basics and notation} Let $\g$ be a complex simple Lie algebra of rank $n$ and $\lie h$ a Cartan subalgebra. We identify $\lie h$ and $\lie h^*$ by means of the invariant inner product $(\cdot, \cdot)$ on $\g$ normalized such that the square length of the maximal root equals $2$. Let $I=\{1,\dots,n\}$ and $R^+$ be the set of positive roots of $\g$. We denote by $\{\alpha_i\}_{i\in I}$ and $\{\omega_i\}_{i\in I}$, the sets of simple roots and fundamental weights, respectively, while $Q,P,Q^+,P^+$ the root and weight lattices with corresponding positive cones.

We fix a Chevalley basis of $\g$ consisting of $x_{\alpha}^{\pm}\in\g_{\pm\alpha}$, for each $\alpha\in R^+$, and $h_i \in \lie h$, $i\in I$.
We also define $h_\alpha \in \lie h$, $\alpha\in R^+$, by $h_\alpha = [x_{\alpha}^+,x_{\alpha}^-]$. We often simplify notation and write $x_i^{\pm}$ in place of $x_{\alpha_i}^
{\pm}$, $i\in I$.
Let $r^{\vee}$ be the maximal number of edges connecting two vertices of the Dynkin diagram of $\g$ and let also
$$d_{\alpha} = \frac{r^{\vee}}{2}(\alpha,\alpha),\quad \check{d}_{\alpha} =\frac{r^{\vee}}{d_{\alpha}}, \quad d_i = d_{\alpha_i}, \quad \alpha \in R^+, i\in I.$$
Recall that, if $C=(c_{ij})_{i,j\in I}$ is the Cartan matrix of $\g$, i.e., $c_{ij}=\alpha_j(h_{i})$, then $d_{i}c_{ij} = d_jc_{ji}$.

Define the loop algebra of $\g$ by $\tlie g=\lie g\otimes_{\mathbb
C} \mathbb C[t,t^{-1}]$ with bracket given by $[x \otimes t^r,y
\otimes t^s]=[x,y] \otimes t^{r+s}$. We identify $\g$ with the
subalgebra $\lie g\otimes 1$ of $\tlie g$, hence, we will continue
denoting its elements by $x$ instead of $x\otimes 1$. The subalgebra
$\lie g[t]=\lie g \otimes \C[t]$ of $\tlie g$ is the current algebra
associated to $\g$.

If $\lie a$ is a subalgebra of $\g$, let $\lie a[t] = \lie a\otimes \C[t]$ and its ideal $\lie
a[t]_+ = \lie a\otimes t\C[t]$. The degree grading on $\C[t]$ defines a natural $\Z_{\geq
0}$-grading on $\lie a[t]$ and thus, also on $U(\lie a[t])$. An
element of the form $(a_1\otimes t^{r_1})\cdots(a_s\otimes t^{r_s})$
has grade $r_1+\cdots + r_s$ and we denote by $U(\lie a[t])[r]$ the
subspace of grade $r$.

The affine Kac-Moody algebra $\widehat{\g}$ is the Lie algebra with underlying vector space $\tlie g \oplus \C c \oplus \C d$ equipped with the Lie bracket given by
$$ [x \otimes t^r, y \otimes t^s] =[x, y] \otimes t^{r+s} +  r\delta_{r,-s}(x,y)c,\quad [c,\hlie g]=0\quad {\rm and}\quad [d,x\otimes t^{r}] =  r x\otimes t^r,$$
for any $x,y\in \lie g$, $r,s\in \Z$. A Cartan subalgebra $\hlie h$ and a Borel subalgebra $\hlie b$ are defined as follows:
$$\hlie h=\h \oplus \C c \oplus \C d, \qquad \hlie b =\hlie h\oplus \n^+ \oplus \g \otimes t\C[t].$$
Set $\hlie n^+ =  \n^+ \oplus \g \otimes t\C[t].$

We often consider $\h^*$ as a subspace of $\widehat{\h}^*$ by
setting $\lambda(c) = \lambda(d) = 0$ for $\lambda \in \h^*$. The
root system and positive root associated to the triangular
decomposition $\hlie g = \hlie n^-\oplus \hlie h \oplus \hlie n^+$
will be denoted by $\widehat R$, $\widehat R^+$, respectively. Let
$\theta\in R^+$ be the highest root, $\delta\in \hlie h^*$ be such
that $\delta(d)=1$, $\delta(c) = \delta(h)=0$, $h\in \lie h$, and
$\alpha_0 = -\theta + \delta$. Then, if we set $\widehat I = I\sqcup
\{0\}$, we have that $\widehat\Delta = \{\alpha_i\}_{i\in \widehat
I}$ is the set of simple roots of $\hlie g$, and
$$\widehat R^+ = (R + \Z_{\geq 1} \delta) \cup R^+ \cup \Z_{\geq 1}\delta.$$

The elements
$x_\alpha^\pm\otimes t^r, x_i^\pm\otimes t^r$, and $h_i\otimes t^r$
will be denoted by $x_{\alpha,r}^\pm, x_{i,r}^\pm$, and $h_{i,r}$,
respectively. Set also, $x_0^{\pm} = x_{\theta,\pm 1}^{\mp}$. Then
$$h_0 := [x_0^+,x_0^-] = c - h_\theta.$$

Let $\widehat Q = \oplus_{i\in \widehat I}\Z\alpha_i$ and $\widehat
Q^+ = \oplus_{i\in \widehat I}\Z_{\geq 0}\alpha_i$.
Let also $\Lambda_0 \in \widehat{\h}^*$ be the unique element
satisfying $\Lambda_0(c) = 1$ and $\Lambda_0(\h) = \Lambda_0(d) =
0$. Then $\hlie h^* = \lie h \oplus  \C\delta \oplus \C\Lambda_0.$
Define $\Lambda_i \in \hlie h^*$, $i\in I$, by the requirement
$\Lambda_i(d)=0$, $\Lambda_i(h_i) = \delta_{i,j}$, $j\in \widehat I$,
and note that $\Lambda_i = \omega_i + \omega_i(h_\theta)\Lambda_0$,
for $i \in I.$ Let $\widehat P = \oplus_{i=0}^n \Z\Lambda_i \oplus
\Z\delta$ and $\widehat P^+ = \oplus_{i=0}^n\Z_{\geq 0}\Lambda_i
\oplus \Z\delta$. Equip $\hlie h^*$ with the partial order $\lambda \leq
\mu$ if and only if $\mu-\lambda\in \widehat Q^+$. Let $\hcal W$ denote the
affine Weyl group, which is generated by the simple reflections
$s_i, \ i\in \widehat I$, where
$$s_i(\mu) = \mu - \mu(h_i)\alpha_i, \quad \mu\in \hlie h^*.$$
The length of $w\in \hcal W$ will be denoted by $\ell(w)$. Recall
that the subgroup of $\hcal W$ generated by $s_i,i\in I$, is the Weyl group $\cal W$ of $\g$ and we denote its longest element by
$w_0$. Let $L = \oplus_{i\in I} \check d_{\alpha_i}\omega_i$ be the
co-weight lattice and $M = \oplus_{i\in I}\check
d_{\alpha_i}\alpha_i$ the co--root lattice. Given $\mu\in \lie h^*$,
we define $t_{\alpha}\in GL(\hlie h^*)$ by
\begin{equation}\label{e:extaffaction}
t_\mu(\lambda)= \lambda-(\lambda,\mu)\delta,\ \  \lambda\in\lie{h}^*\oplus\C\delta,\ \ \ t_{\mu}(\Lambda_0)=\Lambda_0+\mu-\frac12(\mu,\mu)\delta.
\end{equation}
Defining $T_M = \{t_{\mu} \in GL(\hlie h^*)| \mu\in M\}$ we have
$\hcal W =\cal W\ltimes T_M$. The extended affine Weyl group $\tcal
W $ is the semi--direct product $\cal W\ltimes T_L$, where $T_L =
\{t_{\mu} \in GL(\hlie h^*)| \mu\in L\}$. We also have $\tcal W =
\hcal W\ltimes\cal T$, where $\cal T$ is the group of diagram
automorphisms of $\widehat{\lie g}$. The length function $\ell$ is
extended to $\tcal W$ by setting $\ell(w\tau) = \ell(w)$, for all
$w\in \hcal W$ and $\tau \in \cal T$. The following lemma was proved
in \cite{CSVW}.
\begin{lemma}\label{l:length}
Given $\lambda, \mu \in P^+$ and $w\in \cal W$, we have
$$\ell(t_{-\lambda}t_{-\mu}w) = \ell(t_{-\lambda})+ \ell(t_{-\mu}w).$$
\end{lemma}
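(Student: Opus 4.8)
The plan is to deduce this from the combinatorics of inversion sets in $\tcal W$. For $x \in \tcal W$ put $\Phi_x = \{\beta \in \widehat R^+ : x\beta \in \widehat R^-\}$; every element of $\cal T$ permutes $\widehat R^+$, so for $x = v\tau$ with $v\in\hcal W$, $\tau\in\cal T$ one has $\Phi_x = \tau^{-1}\Phi_v$ and hence $|\Phi_x| = \ell(v) = \ell(x)$ throughout $\tcal W$, just as in $\hcal W$. Splitting a positive root $\beta$ according to the sign of $y\beta$ and comparing $\Phi_{xy}$ with $\Phi_x$ and $\Phi_y$ gives the standard identity
$$\ell(xy) = \ell(x) + \ell(y) - 2\,\bigl|\{\eta \in y^{-1}(\Phi_x) : \eta \in \widehat R^-\}\bigr|,$$
so that $\ell(xy) = \ell(x)+\ell(y)$ exactly when $y^{-1}(\Phi_x) \subseteq \widehat R^+$. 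Taking $x = t_{-\lambda}$ and $y = t_{-\mu}w$, the lemma thus reduces to the single inclusion $w^{-1}t_\mu\bigl(\Phi_{t_{-\lambda}}\bigr) \subseteq \widehat R^+$.

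To check this I would first compute $\Phi_{t_{-\lambda}}$ explicitly. By \eqref{e:extaffaction}, $t_{-\lambda}$ fixes $\delta$ and sends a real affine root $\alpha + k\delta$ (with $\alpha\in R$, $k\in\Z$) to $\alpha + \bigl(k + (\alpha,\lambda)\bigr)\delta$; using $\widehat R^+ = (R+\Z_{\ge 1}\delta)\cup R^+\cup\Z_{\ge 1}\delta$ together with $(\alpha,\lambda)\ge 0$ for $\alpha\in R^+$ (as $\lambda\in P^+$), a short case analysis shows that $t_{-\lambda}$ can turn a positive root negative only when its finite part is negative, and one finds
$$\Phi_{t_{-\lambda}} = \bigl\{\,-\alpha + k\delta \;:\; \alpha\in R^+,\ 1\le k\le (\alpha,\lambda)\,\bigr\}$$
(in particular recovering $\ell(t_{-\lambda}) = \sum_{\alpha\in R^+}(\alpha,\lambda)$). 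Now apply $(t_{-\mu}w)^{-1} = w^{-1}t_\mu$ to a typical element $-\alpha+k\delta$ of this set. Since $t_\mu$ fixes $\delta$, and $w\in\cal W$ fixes $\delta$ and preserves $R$, one obtains $w^{-1}t_\mu(-\alpha+k\delta) = -w^{-1}\alpha + \bigl(k + (\alpha,\mu)\bigr)\delta$. Here $-w^{-1}\alpha$ is again a finite root, and the coefficient of $\delta$ satisfies $k + (\alpha,\mu) \ge 1$, because $k\ge 1$ and $(\alpha,\mu)\ge 0$ (using $\mu\in P^+$ and $\alpha\in R^+$). As any real affine root of the form $\gamma + m\delta$ with $m\ge 1$ lies in $\widehat R^+$, the required inclusion holds, and the additivity of lengths follows.

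I do not anticipate a deep obstacle: once the inversion-set criterion is in hand the rest is bookkeeping. The two points that must be handled carefully are, first, keeping all the conventions aligned --- the sign of the translation action in \eqref{e:extaffaction}, the exact shape of $\widehat R^+$, and the placement of $w$ to the right of $t_{-\mu}$ (so that $(t_{-\mu}w)^{-1} = w^{-1}t_\mu$) --- since a slip in any of these reverses the decisive inequality; and second, making sure that the identity $|\Phi_x| = \ell(x)$ and the displayed length formula are valid in the \emph{extended} affine Weyl group rather than only in $\hcal W$, which is precisely what the remark $\Phi_{v\tau} = \tau^{-1}\Phi_v$ provides. The conceptual point driving everything is that each inversion of $t_{-\lambda}$ carries a strictly positive multiple of $\delta$, and no element $w^{-1}t_\mu$ with $w\in\cal W$ and $\mu\in P^+$ can reduce that multiple to zero or below.
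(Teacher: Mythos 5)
Your argument is correct. Note that the paper does not actually prove this lemma --- it simply cites \cite{CSVW} --- so there is no internal proof to compare against; what you have written is a complete, self-contained replacement for that citation, and it follows the standard route (essentially the Iwahori--Matsumoto inversion-set computation that underlies the result in \cite{CSVW}). Your key steps all check out: the identity $\ell(xy)=\ell(x)+\ell(y)-2\,|\{\eta\in\Phi_x: y^{-1}\eta\in\widehat R^-\}|$ is valid in $\tcal W$ once one knows $|\Phi_{v\tau}|=\ell(v)$, which your remark $\Phi_{v\tau}=\tau^{-1}\Phi_v$ justifies; the computation $\Phi_{t_{-\lambda}}=\{-\alpha+k\delta: \alpha\in R^+,\ 1\le k\le(\alpha,\lambda)\}$ is consistent with the action \eqref{e:extaffaction} and with $\widehat R^+=(R+\Z_{\ge1}\delta)\cup R^+\cup\Z_{\ge1}\delta$; and the final inequality $k+(\alpha,\mu)\ge 1$ is exactly the point. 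The only caveat worth recording is one inherited from the statement itself: for $t_{-\lambda}$ and $t_{-\mu}$ to lie in $\tcal W=\cal W\ltimes T_L$ and for the exponents $(\alpha,\lambda)$, $(\alpha,\mu)$ to be integers, one needs $\lambda,\mu$ in the lattice $L$ rather than merely in $P^+$; these coincide in the simply laced case (in particular in type $A$, the only case used in the paper), so nothing is lost for the application.
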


\subsection{Graded $\g[t]$-modules} A graded representation of $\g[t]$ is a $\Z$-graded vector space which admits a compatible Lie algebra action of $\g[t]$, i.e.,
$$V = \bigoplus_{r\in \Z}V[r], \qquad (\g\otimes t^r)V[s] \subseteq V[s+r], \ \ s\in \Z,\ \ r\in \Z_{\geq 0}.$$
A graded morphism of $\g[t]$-modules is a degree zero morphism of
$\g[t]$-graded modules. For $r\in \Z$, we let $\tau_r V$ be the $r$-th graded shift of $V$.
Given $a\in \C$, let $\ev_a:\g[t] \to \g$ be the evaluation map $x\otimes f(t)\mapsto f(a)x$. Therefore, if $W$ is a $\g$-module we can define a
$\g[t]$-module structure on $W$ by taking the pull--back
by $\ev_a$. This $\g[t]$-module is denoted by $\ev_a W$ and it is
clearly irreducible if and only if $W$ is an irreducible
$\g$-module. Moreover, $\ev_0 W$ is a graded $\g[t]$-module such
that
$$(\ev_0 W)[0] = W\quad {\rm and} \quad U(\g\otimes t\C[t])(\ev_0 W) = 0.$$

\subsection{Demazure modules}
Recall that a weight module for $\hlie h$ is one where $\hlie h$ acts diagonally. For $\Lambda \in \widehat P^+$, let $\widehat V(\Lambda)$ be the irreducible highest weight integrable $\hlie g$-module generated by an element $v_{\Lambda}$ with defining relations
$$\hlie n^+ v_{\Lambda} = 0, \quad h_i v_{\Lambda} = \Lambda(h_i) v_{\Lambda}, \quad (x_{\alpha_i}^-)^{\Lambda(h_i)+1}v_{\Lambda}=0, \quad i\in \widehat I.$$
Then
$$\widehat V(\Lambda)_{\mu} \neq \{0\} \quad \textrm{only if}\quad \mu \in \Lambda - \widehat Q^+.$$

The following proposition is well--known (see \cite[Chapters 10,11]{Kac} for instance).
\begin{prop}\label{tp}
\begin{enumerate}[(i)] \item Let $\Lambda\in \widehat P^+$.  Then $$\dim V(\Lambda)_{w\Lambda}=1,\ \ {\rm{ for \ all}}\ \ w\in\widehat W.$$
\item Given $\Lambda',\Lambda''\in\widehat P^+$, let $\Lambda = \Lambda' + \Lambda''$. Then
$$\dim\Hom_{\widehat{\lie g}}\left(V(\Lambda), V(\Lambda')\otimes V(\Lambda'')\right)=\begin{cases} 1,\ \ \Lambda=\Lambda'+\Lambda'',\\ 0,\ \ \Lambda\notin\Lambda'+\Lambda''-\widehat Q^+.\end{cases}$$
 Moreover, for all  $w\in\widehat W$, we have
$$\left (\widehat V(\Lambda')\otimes \widehat V(\Lambda'')\right)_{w\Lambda} =\widehat  V(\Lambda)_{w\Lambda},$$ where we have identified $\widehat V(\Lambda)$ with its image in $\widehat V(\Lambda')\otimes\widehat  V(\Lambda'')$.
\end{enumerate}
\end{prop}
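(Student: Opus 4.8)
\textit{Proof plan.} The plan is to derive everything from two standard facts about integrable highest weight modules over the affine Kac--Moody algebra $\hlie g$: first, that the formal character of such a module is $\hcal W$-invariant (Kac, Chapters~10--11), and second, the elementary observation that $\widehat Q^+$ is a pointed cone, so a sum of two elements of $\widehat Q^+$ vanishes only if both summands do.

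For part (i): since $\widehat V(\Lambda)$ is integrable, the simple reflections $s_i$, $i\in\widehat I$, act on the set of weights by permuting weight spaces of equal dimension, hence $\dim\widehat V(\Lambda)_{\mu}=\dim\widehat V(\Lambda)_{w\mu}$ for all $w\in\hcal W$. Applying this to $\mu=\Lambda$, where $\dim\widehat V(\Lambda)_{\Lambda}=1$ because $\Lambda$ is the highest weight, gives $\dim\widehat V(\Lambda)_{w\Lambda}=1$ for all $w\in\hcal W$.

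For the first assertion of part (ii), set $M:=\widehat V(\Lambda')\otimes\widehat V(\Lambda'')$, and suppose $\Lambda=\Lambda'+\Lambda''$. A weight of $M$ has the form $\mu'+\mu''$ with $\Lambda'-\mu'\in\widehat Q^+$ and $\Lambda''-\mu''\in\widehat Q^+$; if $\mu'+\mu''=\Lambda$ then $(\Lambda'-\mu')+(\Lambda''-\mu'')=0$, which by pointedness of $\widehat Q^+$ forces $\mu'=\Lambda'$ and $\mu''=\Lambda''$. Hence $M_{\Lambda}=\C\,(v_{\Lambda'}\otimes v_{\Lambda''})$ is one-dimensional, and this vector is annihilated by $\hlie n^+$. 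Now any $\hlie g$-module map $\widehat V(\Lambda)\to M$ is determined by the image of $v_{\Lambda}$, which must be an $\hlie n^+$-highest weight vector of weight $\Lambda$; conversely, since $M$ is integrable, any such vector satisfies the defining relations of $\widehat V(\Lambda)$ (in particular $(x_{\alpha_i}^-)^{\Lambda(h_i)+1}$ kills it for $i\in\widehat I$) and therefore defines a map. Thus $\Hom_{\hlie g}(\widehat V(\Lambda),M)\cong\{v\in M_{\Lambda}:\hlie n^+v=0\}$ is one-dimensional. If instead $\Lambda\notin\Lambda'+\Lambda''-\widehat Q^+$, then $\Lambda$ is not $\le$ any weight of $M$, so $M_{\Lambda}=0$ and the Hom-space vanishes.

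For the ``moreover'' assertion, fix $w\in\hcal W$. Applying $w^{-1}$ sends a weight $\mu'$ of $\widehat V(\Lambda')$ to a weight $w^{-1}\mu'\le\Lambda'$ and likewise for the second factor, so the pointedness argument above, applied now to $w\Lambda=w\Lambda'+w\Lambda''$, shows that the only way to write $w\Lambda$ as a sum of a weight of $\widehat V(\Lambda')$ and a weight of $\widehat V(\Lambda'')$ is $w\Lambda'+w\Lambda''$. Hence $M_{w\Lambda}=\widehat V(\Lambda')_{w\Lambda'}\otimes\widehat V(\Lambda'')_{w\Lambda''}$, which is one-dimensional by part (i). Identifying $\widehat V(\Lambda)$ with the submodule of $M$ generated by $v_{\Lambda'}\otimes v_{\Lambda''}$, part (i) gives $\widehat V(\Lambda)_{w\Lambda}\ne 0$; being contained in the one-dimensional space $M_{w\Lambda}$, it must equal it. The only genuinely nontrivial inputs are the $\hcal W$-invariance of weight multiplicities and the universal-property description of $\Hom_{\hlie g}(\widehat V(\Lambda),-)$ in terms of highest weight vectors; I would simply cite Kac rather than reprove these, and the main obstacle is thus purely expository, namely isolating exactly which standard facts are needed.
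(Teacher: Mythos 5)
Your argument is correct, and it is essentially the standard one: the paper offers no proof of this proposition, simply calling it well--known and citing Kac, Chapters 10--11, and the facts you isolate ($\hcal W$-invariance of weight multiplicities of integrable highest weight modules, pointedness of $\widehat Q^+$, and the characterization of maps out of $\widehat V(\Lambda)$ via integrable highest weight vectors) are exactly the ingredients of the argument found there. No discrepancy to report.
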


Given $\Lambda\in \widehat P^+$ and $w\tau\in \tcal W$, with $w\in \hcal W$ and $\tau \in \cal T$, the subspace $\widehat V(\tau\Lambda)_{w\tau\Lambda}$ is one-dimensional and we fix a non-zero vector $v_{w\tau \Lambda}$ of this weight space. The Demazure module $D(w\tau\Lambda)$ is the $\hlie b$-module of $\widehat V(\tau \Lambda)$ defined by $$D(w\tau\Lambda) = U(\hlie b)v_{w\tau\Lambda}.$$

In this paper we consider the following generalization of Demazure
modules introduced in \cite{na:dem}. Given $m\in\Z_{\geq 1}$ and pairs
$(w_r,\Lambda^r)\in \tcal W \times \widehat P^+$, $1\leq r \leq m$,
set
\begin{equation*}
D(w_1\Lambda^1,\ldots, w_m\Lambda^m) = U(\hlie b)(v_{w_1\Lambda^1}\otimes\cdots \otimes v_{w_m\Lambda^m})\subseteq D(w_1\Lambda^1)\otimes\cdots\otimes D(w_m\Lambda^m).
\end{equation*}

Our primary focus in this paper are Demazure modules and its generalizations such that $w_r\Lambda^r(h_i)\leq 0$, for all $i\in I$, $1\leq r\leq m$. In this case we have $\lie n^- v_{w_r\Lambda^r} =0$, $1\leq r \leq m$, and then $D(w_1\Lambda^1,\ldots, w_m\Lambda^m)$ is a module for the parabolic subalgebra $\hlie b\oplus \lie n^-$, i.e.,
\begin{equation}\label{e:gendemgt}D(w_1\Lambda^1,\ldots, w_m\Lambda^m) = U(\g[t])(v_{w_0w_1\Lambda^1}\otimes\cdots \otimes v_{w_0w_m\Lambda^m}),
\end{equation}
c.f. \cite[Proposition 16]{Rav15}. Given $(\ell,\lambda)\in \Z_{\geq 1}\times P^+$, there exists unique $\Lambda\in \widehat P^+$ and $w\in \tcal W$ such that $$w\Lambda = w_0\lambda + \ell\Lambda_0$$ and we shall denote the $\g[t]$-module $D(w\Lambda)$ by $D(\ell,\lambda)$. In \cite{CV14}, it is given a finite presentation for the $\g[t]$-modules $D(\ell,\lambda)$ which we recall now for the simply laced case (see \cite[Theorem 2]{CV14} for complete generality).

\begin{prop}\label{p:demrel}
Assume that $\g$ is simply laced and let $(\ell,\lambda)\in \Z_{\geq 1}\times P^+$. The $\g[t]$-module $D(\ell,\lambda)$ is generated by an element $v_{\ell,\lambda}$ satisfying the following defining relation:
\begin{gather}(x_i^+\otimes 1)v_{\ell,\lambda}=0,\ \ \ (h_i\otimes t^r)v_{\ell,\lambda} = \lambda(h_i)\delta_{r,0}v_{\ell,\lambda},\ \ \ (x_i^{-})^{\lambda(h_i)+1}v_{\ell,\lambda} = 0,\label{eq1}\\
(x_{\alpha}^-\otimes t^{s_{\alpha}})v_{\ell,\lambda} = 0,\quad \alpha\in R^+,\label{eq2}\\
(x_{\alpha}^-\otimes t^{s_{\alpha}-1})^{m_{\alpha}+1}v_{\ell,\lambda} = 0,\quad \alpha\in R^+,\label{eq3}
\end{gather}
where $\lambda(h_{\alpha}) = (s_{\alpha}-1)\ell + m_{\alpha}$, with $0<m_{\alpha}\leq \ell$. Moreover, if $m_{\alpha}=\ell$, \eqref{eq3} is a consequence of \eqref{eq1} and \eqref{eq2}. In the particular case when $\ell=1$, the relations \eqref{eq2} and \eqref{eq3} follow from \eqref{eq1}.
\end{prop}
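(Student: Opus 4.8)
The plan follows the standard two-sided estimate for such presentations (this is the simply-laced case of \cite[Theorem~2]{CV14}). Let $M(\ell,\lambda)$ be the $\g[t]$-module on a generator $u$ with the relations \eqref{eq1}--\eqref{eq3} imposed. First I would check that the cyclic generator $v_{\ell,\lambda}$ of the Demazure module $D(\ell,\lambda)$ satisfies \eqref{eq1}--\eqref{eq3}, producing a surjective graded map $M(\ell,\lambda)\twoheadrightarrow D(\ell,\lambda)$, $u\mapsto v_{\ell,\lambda}$; then I would bound the graded character of $M(\ell,\lambda)$ above by that of $D(\ell,\lambda)$, which forces this map to be an isomorphism.

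For the first step, write $D(\ell,\lambda)=D(w\Lambda)$ with $w\Lambda=w_0\lambda+\ell\Lambda_0$. Since $\cal W$ fixes $\Lambda_0$ we have $w_0w\Lambda=\lambda+\ell\Lambda_0$, and since $(w_0\lambda+\ell\Lambda_0)(h_i)=(w_0\lambda)(h_i)\le 0$ for all $i\in I$, identity \eqref{e:gendemgt} lets me take $v_{\ell,\lambda}:=v_{w_0w\Lambda}$ (an extremal weight vector of $\h$-weight $\lambda$ in a level-$\ell$ integrable highest weight $\hlie g$-module) as $\g[t]$-generator; applying a lift of the finite element $w_0$ to $\lie n^-v_{w\Lambda}=0$ also gives $\lie n^+v_{\ell,\lambda}=0$. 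The key input is the $\lie{sl}_2$-theory of extremal vectors: for each real affine root $\beta$, the vector $v_{\ell,\lambda}$ sits at an end of the $\lie{sl}_2$-string it spans, that string has length $|\langle w_0w\Lambda,\beta^\vee\rangle|+1$, and $v_{\ell,\lambda}$ is highest (resp.\ lowest) weight for the $\beta$-copy of $\lie{sl}_2$ precisely when $\langle w_0w\Lambda,\beta^\vee\rangle\ge0$ (resp.\ $\le0$). The case $\beta=\alpha_i$ yields \eqref{eq1}, except for the vanishing $(h_i\otimes t^r)v_{\ell,\lambda}=0$ ($r\ge1$), which instead follows from the standard fact that a lift of $w_0w$ carrying $v_\Lambda$ to $v_{\ell,\lambda}$ stabilizes the imaginary root space $\h\otimes t^r$ on which $v_\Lambda$ is annihilated. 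For $\alpha\in R^+$ with $\lambda(h_\alpha)\ge1$ and $s\ge1$, $x_\alpha^-\otimes t^s$ is the root vector for $\beta_s:=s\delta-\alpha\in\widehat R^+$; in the simply-laced case $(\beta_s,\beta_s)=2$, so a short computation with the invariant form, using $(w_0w\Lambda)(c)=\ell$ and $(w_0w\Lambda)(h_\alpha)=\lambda(h_\alpha)$, gives
\begin{equation*}
\langle w_0w\Lambda,\beta_s^\vee\rangle=\ell s-\lambda(h_\alpha).
\end{equation*}
Writing $\lambda(h_\alpha)=(s_\alpha-1)\ell+m_\alpha$ with $0<m_\alpha\le\ell$, this quantity is $\ge\ell-m_\alpha\ge0$ for all $s\ge s_\alpha$, so $x_\alpha^-\otimes t^s$ kills $v_{\ell,\lambda}$ for $s\ge s_\alpha$ — that is \eqref{eq2}; for $s=s_\alpha-1$ it equals $-m_\alpha<0$, so $v_{\ell,\lambda}$ is the lowest weight vector of an $(m_\alpha+1)$-dimensional $\lie{sl}_2$-string, whence $(x_\alpha^-\otimes t^{s_\alpha-1})^{m_\alpha+1}v_{\ell,\lambda}=0$ — that is \eqref{eq3}. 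The supplementary assertions are then checked directly: when $m_\alpha=\ell$ one has $\langle w_0w\Lambda,\beta_{s_\alpha}^\vee\rangle=0$ and a Garland-type identity inside the $\lie{sl}_2$-current subalgebra attached to $\alpha$ shows \eqref{eq3} follows from \eqref{eq1} and \eqref{eq2}, and when $\ell=1$ the module $D(1,\lambda)$ is the local Weyl module, presented by \eqref{eq1} alone, so \eqref{eq2} and \eqref{eq3} become consequences of \eqref{eq1}.

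For the second step, \eqref{eq1} together with taking brackets gives $\lie n^+[t]u=0$ and $\h[t]_+u=0$, hence $M(\ell,\lambda)=U(\lie n^-[t])u$ by the PBW theorem. Then I would combine \eqref{eq2} and \eqref{eq3} with Garland's identities and the $\lie{sl}_2$-triples $\{x_\alpha^+,h_\alpha,x_\alpha^-\}$, $\alpha\in R^+$, to derive enough relations among PBW monomials in the $x_\alpha^-\otimes t^r$ to bound both their exponents and their total $t$-degree, obtaining a spanning set of $M(\ell,\lambda)$ of the expected size; comparing its cardinality with $\dim D(\ell,\lambda)$ — computed from the Demazure character formula, or, if one prefers, from the known graded character of the corresponding fusion product of Kirillov--Reshetikhin modules governed by the $Q$-system — then shows the surjection above is an isomorphism. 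A convenient way to carry out the comparison is to realize $M(\ell,\lambda)$ as a quotient of an iterated fusion product of the modules $D(\ell,m\omega_i)$ via the standard short exact sequences relating Demazure modules of consecutive highest weights, and to induct on $\lambda$.

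The first step is essentially routine once the $\lie{sl}_2$-theory of extremal weight vectors is in hand. The main obstacle is the second: showing that \eqref{eq1}--\eqref{eq3} \emph{alone} already cut $M(\ell,\lambda)$ down to the size of the Demazure module. This demands both delicate combinatorial control of how the relations interact through Garland's identities and an independent grip on the graded character of $D(\ell,\lambda)$; it is also where the simply-laced hypothesis genuinely enters (notably in the reduction \eqref{eq3}$\,\Leftarrow\,$\eqref{eq1},\eqref{eq2} when $m_\alpha=\ell$), and why the full statement ultimately relies on the theory of Demazure and fusion modules.
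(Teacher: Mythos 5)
You should know at the outset that the paper contains no proof of Proposition \ref{p:demrel}: it is imported from \cite[Theorem 2]{CV14}, restated in the simply-laced case, so there is no internal argument to compare against and your proposal has to be measured against the proof in that reference --- whose overall strategy (surjection from the abstractly presented module onto $D(\ell,\lambda)$, then a matching upper bound) you have correctly identified. Your first step is sound and essentially complete: with $D(\ell,\lambda)=U(\g[t])v_{w_0w\Lambda}$ and $w_0w\Lambda=\lambda+\ell\Lambda_0$, the computation $\langle\lambda+\ell\Lambda_0,(s\delta-\alpha)^\vee\rangle=\ell s-\lambda(h_\alpha)$ is correct (in the simply-laced case $(s\delta-\alpha,s\delta-\alpha)=2$, so $(s\delta-\alpha)^\vee=sc-h_\alpha$), and combined with the $\lie{sl}_2$-string property of extremal weight vectors it does yield \eqref{eq1}--\eqref{eq3}; your argument for $(h_i\otimes t^r)v_{\ell,\lambda}=0$, $r\geq 1$, via Weyl-group stability of the imaginary root spaces is also fine.

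The genuine gap is the reverse inequality, $\dim M(\ell,\lambda)\leq\dim D(\ell,\lambda)$, which you rightly call the main obstacle but then only gesture at. The sentence about deriving ``enough relations among PBW monomials \dots to bound both their exponents and their total $t$-degree'' is not a proof step; it is a placeholder for the entire content of \cite{CV14}, where the bound is obtained by introducing an auxiliary family of cyclic $\g[t]$-modules indexed by $|R^+|$-tuples of partitions, constructing short exact sequences among them as one partition is modified, and identifying the extreme members with Demazure modules and with fusion products of Kirillov--Reshetikhin modules through the $Q$-system. None of that machinery is reproduced or replaced in your sketch, so the proposal does not close. Two smaller corrections: the redundancy of \eqref{eq3} when $m_\alpha=\ell$ is not where the simply-laced hypothesis ``genuinely enters'' --- the analogous redundancy is part of the general statement of \cite[Theorem 2]{CV14} with the constants $d_\alpha$ inserted; the hypothesis is genuinely needed for the final assertion, since $D(1,\lambda)$ coincides with the local Weyl module (presented by \eqref{eq1} alone, by \cite{CL06} and \cite{FoL}) only in the simply-laced case.
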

We declare the grade of $v_{\ell,\lambda}$ to be zero and, since the defining relations of $D(\ell,\lambda)$ are graded, it follows that $D(\ell,\lambda)$ is a graded $\g[t]$-module. The following is a consequence of Proposition \ref{tp} (ii).
\begin{lemma}\label{l:DequiGD}
Let $\lambda \in P^+$ and $\ell \in \Z_{\geq 1}$. Then
$$D(\ell,\ell\lambda) \cong_{\g[t]} U(\lie g[t])v_{1,\lambda}^{\otimes \ell}\subseteq D(1,\lambda)^{\otimes \ell}.$$\qed
\end{lemma}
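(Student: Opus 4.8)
The plan is to identify $D(\ell,\ell\lambda)$ with the generalized Demazure module having $\ell$ equal factors $D(1,\lambda)$, and then to apply \eqref{e:gendemgt}. Write $D(1,\lambda)=D(w'\Lambda')$ with $w'\Lambda'=w_0\lambda+\Lambda_0$, $\Lambda'\in\widehat P^+$, $w'\in\tcal W$, and decompose $w'=w_1\tau$ with $w_1\in\hcal W$, $\tau\in\cal T$; setting $\Sigma=\tau\Lambda'\in\widehat P^+$ we have $D(1,\lambda)=U(\hlie b)v_{w_1\Sigma}\subseteq\widehat V(\Sigma)$. Since $w'$ is linear and $\ell\Lambda'\in\widehat P^+$, we get $w'(\ell\Lambda')=\ell w'\Lambda'=w_0(\ell\lambda)+\ell\Lambda_0$; by the uniqueness in the definition of $D(\ell,\ell\lambda)$ this forces $D(\ell,\ell\lambda)=D(w'(\ell\Lambda'))=U(\hlie b)v_{w_1(\ell\Sigma)}\subseteq\widehat V(\ell\Sigma)$.

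The key step is to identify, by induction on $\ell$ using Proposition \ref{tp}(ii), the extreme weight vector $v_{w_1(\ell\Sigma)}\in\widehat V(\ell\Sigma)$ with $v_{w_1\Sigma}^{\otimes\ell}$. Realizing $\widehat V(\ell\Sigma)=\widehat V((\ell-1)\Sigma+\Sigma)$ inside $\widehat V((\ell-1)\Sigma)\otimes\widehat V(\Sigma)$ as in Proposition \ref{tp}(ii), that proposition yields
$$\widehat V(\ell\Sigma)_{w_1(\ell\Sigma)}=\big(\widehat V((\ell-1)\Sigma)\otimes\widehat V(\Sigma)\big)_{w_1(\ell\Sigma)},$$
which is one-dimensional by Proposition \ref{tp}(i) and contains the nonzero vector $v_{w_1((\ell-1)\Sigma)}\otimes v_{w_1\Sigma}$; by the inductive hypothesis this equals $v_{w_1\Sigma}^{\otimes\ell}$, after rescaling the fixed extreme vectors. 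Hence
$$D(\ell,\ell\lambda)=U(\hlie b)v_{w_1(\ell\Sigma)}=U(\hlie b)\big(v_{w_1\Sigma}^{\otimes\ell}\big)=D(w_1\Sigma,\dots,w_1\Sigma)\subseteq D(w_1\Sigma)^{\otimes\ell}=D(1,\lambda)^{\otimes\ell},$$
the generalized Demazure module with $\ell$ equal factors.

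To conclude, note that $w_1\Sigma(h_i)=(w_0\lambda)(h_i)\le 0$ for every $i\in I$, so \eqref{e:gendemgt} applies and gives $D(w_1\Sigma,\dots,w_1\Sigma)=U(\g[t])(v_{w_0w_1\Sigma}^{\otimes\ell})$. By \eqref{e:gendemgt} with a single factor, $v_{w_0w_1\Sigma}$ is, up to a nonzero scalar and the grade-zero normalization, the cyclic generator $v_{1,\lambda}$ of $D(1,\lambda)$ (both are annihilated by $\n^+$, have $\g$-weight $\lambda$, and span a one-dimensional weight space by Proposition \ref{tp}(i)). Since on each side the grading is induced from the $d$-eigenvalue shifted so that the cyclic generator lies in degree $0$, which is compatible with tensor products, we obtain $D(\ell,\ell\lambda)\cong_{\g[t]}U(\g[t])v_{1,\lambda}^{\otimes\ell}\subseteq D(1,\lambda)^{\otimes\ell}$, as claimed.

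The main obstacle is the middle paragraph: the inductive application of Proposition \ref{tp}(ii) to peel off one tensor factor at a time, together with the bookkeeping of one-dimensional extreme weight spaces and the absorption of the diagram-automorphism component $\tau$ of $w'$ into the dominant weight $\Sigma$.
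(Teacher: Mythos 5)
Your proof is correct and follows exactly the route the paper intends: the paper dismisses this lemma as ``a consequence of Proposition \ref{tp}(ii),'' and your argument is precisely the detailed working-out of that, identifying $v_{w_1(\ell\Sigma)}$ with $v_{w_1\Sigma}^{\otimes\ell}$ via the one-dimensional extreme weight spaces and then passing to the $\g[t]$-generator through \eqref{e:gendemgt}. The only extra care you supply (linearity of $t_\mu\in GL(\hlie h^*)$, absorbing the diagram automorphism into $\Sigma$, the grading normalization) is exactly the bookkeeping the paper leaves implicit.
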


\subsection{Fusion product}
 We recall the notion of fusion product of finite dimensional cyclic graded $\lie g[t]$--modules introduced in \cite{FL}.
Let  $V$ be  a finite--dimensional cyclic $\lie g[t]$--module
generated by an element $v$. We define a filtration $F^rV$, $r\in\mathbb Z_{\geq 0}$, on $V$ by
$$F^rV = \left(\bigoplus_{0\leq s \leq r} U(\lie g[t])[s]\right)\cdot v.$$
The associated graded vector space $\gr V$ acquires a graded $\lie g[t]$--module structure
in a natural way and is generated by the image of $v$ in $\gr V$.

Let $p\in \Z_{\geq 1}$. Let $\lambda_1,\ldots, \lambda_p$ be a sequence of elements of $P^+$ and $z_1, \ldots, z_p$ pairwise distinct complex numbers. Then
$$\mathbf V(\mathbf z): =\ev_{z_1}V(\lambda_1)\otimes \cdots \otimes \ev_{z_p}V(\lambda_p),$$
is a finite dimensional cyclic $\g[t]$-module, where $V(\lambda_s)$ is the finite--dimensional irreducible $\g$-module of highest weight $\lambda_s$, $1\leq s \leq p$. Then, the $\g[t]$-module $\gr \mathbf V(\mathbf z)$ is called the fusion product of $V(\lambda_1),\ldots, V(\lambda_p)$ and denote by $$V(\lambda_1)\ast \cdots \ast V(\lambda_p).$$
Clearly the definition of the fusion product depends on the parameters $z_s$, $1\le s\le p$. However it is conjectured in \cite{FL}  and (proved in certain cases by various people,  \cite{CL06},  \cite{FF}, \cite{FL}  \cite{FoL}, \cite{Kedem} for instance)  that the fusion product is independent of the choice of the complex numbers, hence we suppress this dependence in our notation. Note that, by definition we have
\begin{equation}\label{e:fusiontensor}
V(\lambda_1)\ast \cdots \ast V(\lambda_p)\cong_{\g}V(\lambda_1)\otimes \cdots \otimes V(\lambda_p).
\end{equation}

\section{Quantum algebras and graded limits}

\subsection{Basics and notation} We give a brief reminder on quantum loops algebras and their finite-dimensional representations. We refer the reader to \cite{cp:book} for the basic definitions.

Let $\C(q)$ be the field of rational functions in an indeterminate
$q$ and $\A = \Z[q,q^{-1}]$. Let $U_q(\lie g)$ and $U_q(\tlie g)$ be
the quantized enveloping algebras over $\C(q)$ associated to $\lie
g$ and $\tlie g$, respectively. The algebra $U_q(\g)$ is isomorphic
to a subalgebra of $U_q(\tlie g)$. Let $U_\A(\g)$ and $U_\A(\tlie
g)$ be the $\A$-form of $\uq g$ and $\uqt g$ defined in
\cite{Lus93}.  These are free subalgebras such that
$$U_q(\g) \cong U_\A(\g)\otimes_\A \C(q)\quad U_q(\tlie g) \cong U_\A(\tlie g)\otimes_\A \C(q).$$
Regarding $\C$ to be the $\A$-module by letting $q$ act as $1$, the
algebras $U_\A(\g)\otimes_\A \C$ and $U_\A(\tlie g)\otimes_\A \C$
over $\C$ have $U(\g)$ and $U(\tlie g)$ as canonical quotients. We
also recall that $U_q(\tlie g)$ is a Hopf algebra and that
$U_\A(\tlie g)$, $U_q(\g)$ and $U_\A(\g)$ are Hopf subalgebras.

It is well known that the isomorphism classes of irreducible finite-dimensional representations of $U_q(\g)$ are indexed by elements of $P^+$. Given $\lambda\in P^+$, we denote by $V_q(\lambda)$ an element of the corresponding isomorphism class. Moreover, the category of finite-dimensional $U_q(\g)$-modules is semisimple.

Let $\cal P_q^+$ be the multiplicative monoid of $n$-tuples of
polynomials $\bs\pi = (\pi_1(u),\ldots, \pi_n(u))$,
$\pi_i(u) \in \C(q)[u]$, for an indeterminate $u$, such that
$\pi_i(0)=1$ for all $i\in I$. We shall only be interested in the
submonoid $\cal P^+$ of elements $\bs\pi\in \cal P_q^+$ such that
$\pi_i(u)$ splits into linear factors in $\C(q)$, for all $i\in
I$.

Given $a\in\mathbb C(q)^\times$ and $i\in I$, let $\bs\omega_{i,a}\in \cal P^+$ be the fundamental $\ell$-weights, defined by
$$(\bs\omega_{i,a})_j(u) = 1-\delta_{i,j}au.$$
Observe that $\cal P^+$ is the free abelian monoid generated by $\{\bs\omega_{i,a}: i\in I, a\in \C(q)^{\times}\}$, and denote by $\cal P$ the corresponding free abelian group. Let also $\cal P_{\Z}^+$ be the submonoid of $\cal P^+$ of elements $\bs\pi\in\cal P^+$ where $\pi_i(u)$ has its roots in $q^{\Z}$, for all $i\in I$.

Consider the group homomorphism (weight map) $\wt:\cal P \to P$ by
setting $\wt(\bs\omega_{i,a})=\omega_i$.

It was proved in \cite{cp:qaa, CPbanff, CPnato} that the isomorphism classes of irreducible finite-dimensional representations of $\uqt g$ is indexed by $\cal P_q^+$. Given $\bs\pi \in \cal P_q^+$, we let $L_q(\bs\pi)$ be an irreducible representation in the corresponding isomorphism class. The module $L_q(\bs\pi)$ is said to be an affinization of $V_q(\lambda)$ if $\wt(\bs\pi) = \lambda$. Two simple $U_q(\tlie g)$-modules are said to be equivalent if they are isomorphic as $U_q(\g)$-modules.

It will be convenient to introduce the following notation. Given
$i\in I, a\in\mathbb C^\times, m\in\mathbb Z_{\ge 1}$, define
$$\bs\omega_{i,a,m} = \prod_{j=0}^{m-1} \bs\omega_{i,aq^{d_i(m-1-2j)}}.$$
The modules $L_q(\bs\omega_{i,a,m})$ are called Kirillov-Reshetikhin
modules. Given $\bs\pi\in\cal P^+$, there exist unique
$m_i\in\Z_{\ge 0}$, $a_{ik}\in\C(q)^\times$ and $r_{ik}\in\Z_{\ge
1}$ such that
$$\bs\pi = \prod_{i\in I}\prod_{k=1}^{m_i}
\bs\omega_{i,a_{ik},r_{ik}}$$ with
$$\frac{a_{ij}}{a_{il}}\ne
q^{\pm d_i(r_{ij}+r_{il}-2p)}\quad\text{and}\quad
\sum_{k=1}^{m_i}r_{ik}=\wt(\bs\omega)(h_i)$$ for all $i\in I$, $j\ne
l$ and $0\le p<\min\{r_{ij},r_{il}\}.$ This decomposition is called
$q$-factorization of $\bs\pi$.

In the next theorem we collect important results of $U_q(\tlie
{sl}_{n+1})$-modules. The first item is \cite[Theorem
3.5]{cp:small}, and the second item is the dual of \cite[Theorem
6.1, Corollary 6.2]{cha:braid}, for our case of interest.
\begin{thm}\label{t:sl2tensor}
Assume $\g=\lie{sl}_{n+1}$.
\begin{enumerate}
\item For all $i\in I$, $a\in \C(q)^{\times}$ and $m\in \Z_{\geq 0}$ we have $L_q(\bs\omega_{i,a,m})\cong_{\uq g} V_q(m\omega_i)$.
\item Let $m\in \Z_{\geq 1}$, $i_j\in I$, $a_j \in \C(q)^{\times}$, $n_j\in \Z_{\geq 1}$, $1\leq j \leq m$, be such that
 \begin{equation}\label{e:cyclcond}
r>s \Longrightarrow \dfrac{a_{s}}{a_{r}} \neq q^{n_s+n_r + 2 -2p +2k -i_r-i_s},
\end{equation} for all $1\leq p \leq \min\{n_s,n_r\}$ and $\min\{i_r,i_s\}<k+1\leq \min\{i_r+i_s,n+1\}$. Then $L_q(\prod_{j=1}^m\bs\omega_{i_j,a_j,n_j})$ is the unique irreducible submodule of
\begin{equation}\label{e:tensorproddec} L_q(\bs\omega_{i_1,a_1,n_1})\otimes \ldots \otimes L_q(\bs\omega_{i_m,a_m,n_m}).
\end{equation} Moreover, if \eqref{e:cyclcond} holds for all $1\leq r, s \leq m$, then the module described in \eqref{e:tensorproddec} is irreducible.
\end{enumerate}
In particular, if $\bs\pi = \prod_{j=1}^m \bs\omega_{i,a_j,n_j}$ is its $q$-factorization, then  $$L_q(\bs\pi)\cong_{U_q(\tlie g)}L_q(\bs\omega_{i,a_1,n_1})\otimes \cdots \otimes L_q(\bs\omega_{i,a_m,n_m}).$$\qed
\end{thm}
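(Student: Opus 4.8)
The plan is to derive the last assertion from part~(2). Since every factor of $\bigotimes_{j=1}^m L_q(\bs\omega_{i,a_j,n_j})$ is a Kirillov--Reshetikhin module for the \emph{same} node $i$, we are in the situation of part~(2) with $i_1=\cdots=i_m=i$; also $d_i=1$, as $\g=\lie{sl}_{n+1}$ is simply laced. Substituting $i_r=i_s=i$ into \eqref{e:cyclcond}, the exponents appearing there become the integers $n_s+n_r+2-2p+2k-2i$ with $1\le p\le\min\{n_s,n_r\}$ and $i\le k\le\min\{2i-1,n\}$; writing $k=i+j$, these are the integers $n_s+n_r+2-2p+2j$ with $1\le p\le\min\{n_s,n_r\}$ and $0\le j\le\min\{i-1,n-i\}$. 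So it suffices to show that the $q$-factorization condition forces $a_s/a_r$ to avoid all of these values, for every pair $(r,s)$.

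First I would reorder the tensor factors: partition $\{a_1,\dots,a_m\}$ into its cosets modulo $q^{\Z}$ and order the factors so that inside each coset the exponent of $q$ is non-decreasing, the relative order of different cosets being immaterial. For a pair in distinct cosets, $a_s/a_r\notin q^{\Z}$ and the corresponding instance of \eqref{e:cyclcond} is automatic. For a pair $r>s$ in a common coset one has $a_s/a_r=q^{-c}$ with $c\ge 0$, whereas every exponent forbidden in \eqref{e:cyclcond} above is strictly positive, so \eqref{e:cyclcond} holds for free in the direction $r>s$; in particular, by the first assertion of part~(2), $L_q(\bs\pi)$ embeds in \eqref{e:tensorproddec} as its unique irreducible submodule. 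The remaining point is the opposite direction: for $r>s$ in a common coset the exponent $c$ of $a_r/a_s$ must not be one of the forbidden values, and this is exactly where one invokes the $q$-factorization inequalities $a_r/a_s\ne q^{\pm(n_r+n_s-2p)}$, $0\le p<\min\{n_r,n_s\}$. Granting this, \eqref{e:cyclcond} holds for all $1\le r,s\le m$, so by the last assertion of part~(2) the module \eqref{e:tensorproddec} is irreducible and hence equal to its submodule $L_q(\bs\pi)$.

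The step I expect to be the main obstacle is precisely this last verification --- deducing \emph{all} the inequalities \eqref{e:cyclcond} from the $q$-factorization condition --- which amounts to comparing, for a fixed pair of factors, the finite set of spectral ratios forbidden by \eqref{e:cyclcond} with the one forbidden by the $q$-factorization. If one prefers to sidestep the symmetric form of \eqref{e:cyclcond}, an alternative is to realize $L_q(\bs\pi)$ as the unique irreducible submodule of $\bigotimes_j L_q(\bs\omega_{i,a_j,n_j})$ for one ordering and, using the reversed ordering, as its unique irreducible quotient, and then conclude equality by a dimension count via part~(1), i.e.\ by identifying $L_q(\bs\pi)\cong_{\uq g}\bigotimes_j V_q(n_j\omega_i)$.
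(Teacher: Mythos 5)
The paper itself does not prove parts (i) and (ii) --- they are quoted from \cite{cp:small} and \cite{cha:braid} --- so the only content to compare is your derivation of the closing ``In particular'' clause, and there your argument has a genuine gap, located exactly at the step you yourself flagged. Your reduction is correct up to that point: after setting $i_r=i_s=i$ and ordering the factors within a $q^{\Z}$-coset so that one direction of \eqref{e:cyclcond} is automatic, the remaining direction requires $a_r/a_s\neq q^{c}$ for every $c$ in $\{|n_r-n_s|+2,\,|n_r-n_s|+4,\,\dots,\,n_r+n_s+2\min\{i-1,n-i\}\}$, whereas the $q$-factorization condition only excludes the exponents $\pm c$ with $c\in\{|n_r-n_s|+2,\dots,n_r+n_s\}$. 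When $1<i<n$ the extra values $n_r+n_s+2j$, $1\leq j\leq \min\{i-1,n-i\}$, are not excluded, so the implication ``$q$-factorization $\Rightarrow$ \eqref{e:cyclcond} for all pairs'' fails. Indeed the ``In particular'' clause is false in the stated generality, and the paper's own final Remark contains the counterexample: for $\g=\lie{sl}_4$, $\bs\pi=\bs\omega_{2,1}\bs\omega_{2,q^4}$ is its own $q$-factorization, yet $L_q(\bs\pi)\cong_{U_q(\g)}V_q(2\omega_2)\oplus V_q(\omega_1+\omega_3)$ has dimension $35$, while $L_q(\bs\omega_{2,1})\otimes L_q(\bs\omega_{2,q^4})$ has dimension $36$ and is reducible. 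Your fallback route does not repair this: realizing $L_q(\bs\pi)$ as the unique irreducible quotient of the reversed tensor product requires precisely the same missing inequalities, and the identification $L_q(\bs\pi)\cong_{\uq g}\bigotimes_j V_q(n_j\omega_i)$ that you propose for the dimension count is equivalent to the statement being proved.

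What survives is this: the clause is correct when $i\in\{1,n\}$ (then $\min\{i-1,n-i\}=0$ and the two forbidden sets coincide), or under the stronger hypothesis that \eqref{e:cyclcond} holds for all ordered pairs, which is how the theorem is actually used in the paper. In every application the authors verify that hypothesis directly rather than deducing it from the $q$-factorization: for $\bs\pi_{i,\xi}=\prod_j\bs\omega_{i,q^{\xi_j-1},\xi_j}$ one has $a_s/a_r=q^{\xi_s-\xi_r}$ with $|\xi_s-\xi_r|=|n_s-n_r|<|n_s-n_r|+2$, strictly below the entire forbidden range, and Proposition \ref{p:qinjmap} performs the analogous direct check for the grouping into the $\bs\pi_j$. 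If you restrict the ``In particular'' to such $\bs\pi$ and replace your appeal to the $q$-factorization by this explicit verification, your first paragraph becomes a complete argument.
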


\subsection{The modules $L(\bs\pi)$, $\bs\pi\in \cal P_{\Z}^+$}
In this section we assume that $\g$ is of classical type. We recall
the definition of the $\g[t]$--modules $L(\bs\pi), \bs\pi\in \cal
P_\Z^+$. We refer the reader to \cite[Section 2.1]{bcm} and references therein for a detailed exposition.

It was shown in \cite[Section 4]{cp:weyl} that, given $\bs\pi\in \cal P_\Z^+$, the module $L_q(\bs\pi)$ admits an $\A$-form $L_\A(\bs\pi)$ and there is an action of $\tlie g$ on $\ol{L_q(\bs\pi)}:=L_\A(\bs\pi)\otimes_\A \C$. Moreover, as $\tlie g$-module, $\ol{L_q(\bs\pi)}$ is generated by a vector $v_{\ol{\bs\pi}}$ which satisfies the relations:
$$x_{i,s}^+v_{\ol{\bs\pi}}=0,\quad h_{i,r}v_{\ol{\bs\pi}}=\wt(\bs\pi)(h_i)v_{\ol{\bs\pi}},\quad (x_{i,0}^-)^{\wt(\bs\pi)(h_i)+1}v_{\ol{\bs\pi}}=0.$$
By restricting the action of $\tlie g$ to the subalgebra $\g[t]$ one can regard $\ol{L_q(\bs\pi)}$ as a module of $\g[t]$ generated by $v_{\ol{\bs\pi}}$. The $\g[t]$-module $L(\bs\pi)$ is then defined by the pullback of $\ol{L_q(\bs\pi)}$ by the $\g[t]$-automorphism $x\otimes f(t) \to x\otimes f(t-1)$.

In sum, the following is a consequence of \cite[Section 4]{cp:weyl} and the main result of \cite{CL06}.
\begin{thm}\label{c:Wprojlim}
For $\bs\pi\in \cal P_{\Z}^+$, the $\g[t]$-module $L(\bs\pi)$ is generating by an element $v_{\ol{\bs\pi}}$ satisfying
\begin{equation}\label{e:limrel}(x_i^+ \otimes\C[t])v_{\overline{\bs\pi}}=0,\ \ \ \   (h_i\otimes t^r)v_{\overline{\bs\pi}} =  \delta_{r,0}\wt(\bs\pi)(h_i)  v_{\overline{\bs\pi}},\ \ \ \ (x_i^-\otimes 1)^{\wt(\bs\pi)(h_i)+1}v_{\overline{\bs\pi}}=0.
\end{equation}
Moreover,
\begin{enumerate}
\item  $\dim L_q(\bs\pi) = \dim L(\bs\pi)$,
\item if $L_q(\bs\pi)\cong V_q(\wt (\bs\pi))$, then $L(\bs\pi)\cong_{\g[t]}\ev_0 V(\wt(\bs\pi))$, and
\item if $$L_q(\bs\pi)\cong_{\uqt g} L_q(\bs\omega_{i_1,a_1})\otimes \cdots \otimes L_q(\bs\omega_{i_p,a_p}),$$
for some $p\in \Z_{\geq 1}$ and $(i_j,a_j)\in I\times q^{\Z}$, $1\leq j \leq p$, then the relations \eqref{e:limrel} are defining relations of $L(\bs\pi)$.
\end{enumerate}
\qed
\end{thm}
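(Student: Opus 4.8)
The plan is to transport, through the $\g[t]$-automorphism $\varphi\colon x\otimes f(t)\mapsto x\otimes f(t-1)$, the facts recalled just before the statement: $\ol{L_q(\bs\pi)}=L_\A(\bs\pi)\otimes_\A\C$ is a $\g[t]$-module, cyclic on $v_{\ol{\bs\pi}}$, satisfying the $\tlie g$-relations of \cite[Section 4]{cp:weyl}, and $L(\bs\pi)$ is the pull-back of $\ol{L_q(\bs\pi)}$ along $\varphi$. Concretely: (a) push the $\tlie g$-relations through $\varphi$ to obtain \eqref{e:limrel} and the cyclicity of $L(\bs\pi)$; (b) read off item (i) from freeness of the $\A$-form; (c) deduce item (ii) from the fact that a finite-dimensional $\g[t]$-module irreducible over $\g$ is an evaluation module; (d) deduce item (iii) by comparing $\dim L(\bs\pi)$ with the dimension of a local Weyl module, which is governed by \cite{CL06}.

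For step (a): since $\varphi(x_i^{\pm}\otimes t^{s})=x_i^{\pm}\otimes(t-1)^{s}$ is a $\C$-linear combination of the $x_i^{\pm}\otimes t^{k}$ with $0\le k\le s$, the relations $x_{i,k}^{+}v_{\ol{\bs\pi}}=0$ ($k\ge 0$) valid in $\ol{L_q(\bs\pi)}$ give $(x_i^{+}\otimes\C[t])v_{\ol{\bs\pi}}=0$ in $L(\bs\pi)$, while $\varphi$ fixes $x_i^{-}\otimes 1$, so $(x_i^{-}\otimes 1)^{\wt(\bs\pi)(h_i)+1}v_{\ol{\bs\pi}}=0$ is immediate. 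For the Cartan part, $\varphi(h_i\otimes t^{r})=\sum_{k=0}^{r}\binom{r}{k}(-1)^{r-k}(h_i\otimes t^{k})$ and $h_{i,k}v_{\ol{\bs\pi}}=\wt(\bs\pi)(h_i)v_{\ol{\bs\pi}}$ for every $k\ge 0$, whence $(h_i\otimes t^{r})v_{\ol{\bs\pi}}=\bigl(\sum_{k=0}^{r}\binom{r}{k}(-1)^{r-k}\bigr)\wt(\bs\pi)(h_i)v_{\ol{\bs\pi}}=\delta_{r,0}\wt(\bs\pi)(h_i)v_{\ol{\bs\pi}}$. As $\varphi$ is an automorphism, the cyclicity of $\ol{L_q(\bs\pi)}$ on $v_{\ol{\bs\pi}}$ passes to $L(\bs\pi)$, which proves the first assertion of the theorem. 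Item (i) is then immediate: $L_\A(\bs\pi)$ is free over $\A$, so $\dim_{\C(q)}L_q(\bs\pi)=\dim_{\C}\ol{L_q(\bs\pi)}=\dim_{\C}L(\bs\pi)$, the last equality because a pull-back along an automorphism leaves the underlying vector space unchanged.

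For item (ii): if $L_q(\bs\pi)\cong_{\uq g}V_q(\wt(\bs\pi))$, then since $L_\A(\bs\pi)$ is simultaneously a $U_\A(\g)$-lattice, specialization at $q=1$ preserves the $\g$-character and $\ol{L_q(\bs\pi)}\cong_{\g}V(\wt(\bs\pi))$; in particular it is irreducible already as a $\g[t]$-module. A finite-dimensional $\g$-irreducible $\g[t]$-module is an evaluation module $\ev_a V(\wt(\bs\pi))$, and the relations $x_{i,s}^{+}v_{\ol{\bs\pi}}=0$, $h_{i,r}v_{\ol{\bs\pi}}=h_{i,0}v_{\ol{\bs\pi}}$ force $\g\otimes(t-1)\C[t]$ to annihilate $v_{\ol{\bs\pi}}$, hence $a=1$; applying $\varphi$ moves the evaluation point from $1$ to $0$, so $L(\bs\pi)\cong_{\g[t]}\ev_0 V(\wt(\bs\pi))$.

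For item (iii): assume $L_q(\bs\pi)\cong_{\uqt g}L_q(\bs\omega_{i_1,a_1})\otimes\cdots\otimes L_q(\bs\omega_{i_p,a_p})$ with $(i_j,a_j)\in I\times q^{\Z}$, and set $\lambda=\wt(\bs\pi)=\sum_{j=1}^{p}\omega_{i_j}$. By step (a), $L(\bs\pi)$ is a quotient of the local Weyl module $W(\lambda)$, i.e. the finite-dimensional $\g[t]$-module presented by a generator subject to \eqref{e:limrel}, so it suffices to prove $\dim W(\lambda)\le\dim L(\bs\pi)$. By the main result of \cite{CL06}, $W(\lambda)\cong W(\omega_{i_1})\ast\cdots\ast W(\omega_{i_p})$; moreover each fundamental loop module $L_q(\bs\omega_{i,a})$ coincides with the fundamental quantum local Weyl module, and $\dim W(\omega_{i_j})=\dim L_q(\bs\omega_{i_j,a_j})$ by the fundamental case of the Weyl module dimension statement of \cite{cp:weyl}. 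Combining these with item (i),
$$\dim W(\lambda)=\prod_{j=1}^{p}\dim W(\omega_{i_j})=\prod_{j=1}^{p}\dim L_q(\bs\omega_{i_j,a_j})=\dim L_q(\bs\pi)=\dim L(\bs\pi),$$
so the surjection $W(\lambda)\twoheadrightarrow L(\bs\pi)$ is an isomorphism and \eqref{e:limrel} are defining relations for $L(\bs\pi)$. The only step that is not purely formal is the evaluation-module argument in item (ii) — pinning down the evaluation point and propagating $\g\otimes(t-1)\C[t]\cdot v_{\ol{\bs\pi}}=0$ from the $\n^{+}$- and Cartan-relations to all of $\g$; everything else is bookkeeping around the cited results of \cite{cp:weyl} and \cite{CL06}.
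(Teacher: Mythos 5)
Your argument is correct and is precisely the intended one: the paper offers no proof of this theorem, stating it as a direct consequence of \cite[Section 4]{cp:weyl} together with the main result of \cite{CL06}, and your write-up faithfully unpacks exactly those ingredients (transporting the relations of $\ol{L_q(\bs\pi)}$ through the automorphism $t\mapsto t-1$, the freeness of the $\A$-form for (i), the evaluation-module classification for (ii), and the surjection from the local Weyl module $W(\wt(\bs\pi))$ combined with the Chari--Loktev dimension/fusion formula for (iii)). No gaps worth flagging.
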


The next result will be very useful in the proof of our main result (see \cite[Lemma 2.20 and proof of Proposition 3.21]{mou:res}).
\begin{lemma}\label{l:limindmap} Let $r\in \Z_{\geq 1}$. Let $\bs\pi_j\in \cal P_\Z^+$, for all $1\leq j \leq r$, and set $\bs\pi = \prod_{j=1}^r\bs\pi_j$. Assume also that
there exists a map of $U_q(\tlie g)$--modules $$L_q(\bs\pi)\to L_q(\bs\pi_1)\otimes \cdots \otimes L_q(\bs\pi_r).$$
 Then there exists a map of
$\g[t]$--modules $$L(\bs\pi)\to L(\bs\pi_1)\otimes \cdots \otimes L(\bs\pi_r),$$
 mapping $v_{\ol{\bs\pi}}\to v_{\ol{\bs\pi_1}}\otimes \cdots\otimes v_{\ol{\bs\pi_r}}$.\qed
\end{lemma}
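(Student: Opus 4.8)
The statement to be proved is Lemma \ref{l:limindmap}: given $\bs\pi = \prod_{j=1}^r \bs\pi_j$ with $\bs\pi_j \in \cal P_\Z^+$ and a $U_q(\tlie g)$-module map $L_q(\bs\pi) \to L_q(\bs\pi_1)\otimes\cdots\otimes L_q(\bs\pi_r)$, produce a $\g[t]$-module map $L(\bs\pi) \to L(\bs\pi_1)\otimes\cdots\otimes L(\bs\pi_r)$ sending the cyclic generator $v_{\ol{\bs\pi}}$ to the tensor product of cyclic generators. The key idea is that the whole construction of $L(\bs\pi)$ from $L_q(\bs\pi)$ is functorial through $\A$-forms, so a $U_q(\tlie g)$-map already determines the desired classical map after carefully tracking $\A$-forms, the specialization $q\mapsto 1$, and the final automorphism shift $t \mapsto t-1$.

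First I would reduce to $r=2$ by an obvious induction, so it suffices to treat a map $f\colon L_q(\bs\pi) \to L_q(\bs\pi_1)\otimes L_q(\bs\pi_2)$ with $\bs\pi = \bs\pi_1\bs\pi_2$. Next, by Schur's lemma and irreducibility of $L_q(\bs\pi)$, the map $f$ is either zero (trivial case) or injective; in the nonzero case, normalizing $f$ appropriately, the image of the $\ell$-highest weight vector $v_{\bs\pi}$ is a nonzero $\ell$-highest weight vector of $\ell$-weight $\bs\pi$ in the tensor product, and since $\bs\pi_1 \otimes \bs\pi_2$ has $v_{\bs\pi_1}\otimes v_{\bs\pi_2}$ as its essentially unique such vector (by the $\ell$-weight grading and triangularity), we may arrange $f(v_{\bs\pi}) = v_{\bs\pi_1}\otimes v_{\bs\pi_2}$. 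Now choose the $\A$-forms: $L_\A(\bs\pi) = U_\A(\tlie g)v_{\bs\pi} \subseteq L_q(\bs\pi)$ and similarly $L_\A(\bs\pi_i) = U_\A(\tlie g)v_{\bs\pi_i}$, and note $L_\A(\bs\pi_1)\otimes_\A L_\A(\bs\pi_2)$ is a $U_\A(\tlie g)$-stable $\A$-lattice in the tensor product (using that $U_\A(\tlie g)$ is a Hopf $\A$-subalgebra). Since $f$ is a $U_q(\tlie g)$-map sending $v_{\bs\pi}$ into $L_\A(\bs\pi_1)\otimes_\A L_\A(\bs\pi_2)$, it restricts to a $U_\A(\tlie g)$-map $f_\A\colon L_\A(\bs\pi)\to L_\A(\bs\pi_1)\otimes_\A L_\A(\bs\pi_2)$.

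Then I would apply $-\otimes_\A \C$ (specializing $q\mapsto 1$): this yields a $U(\tlie g)$-module map $\ol f\colon \ol{L_q(\bs\pi)} \to \ol{L_q(\bs\pi_1)}\otimes_\C \ol{L_q(\bs\pi_2)}$ sending $v_{\ol{\bs\pi}} \mapsto v_{\ol{\bs\pi_1}}\otimes v_{\ol{\bs\pi_2}}$, because $\otimes_\A \C$ is monoidal and compatible with the Hopf structure on $U_\A(\tlie g)\otimes_\A\C$, which surjects onto $U(\tlie g)$; one must check that the $\tlie g$-action induced on $\ol{L_q(\bs\pi_1)}\otimes_\C\ol{L_q(\bs\pi_2)}$ from the specialized Hopf structure agrees with the tensor-product $\tlie g$-action used in the definition of $L(\bs\pi_i)$, which is exactly the content of how these modules are set up in \cite{cp:weyl}. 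Restricting along $\g[t] \hookrightarrow \tlie g$ gives a $\g[t]$-map, and finally pulling back along the $\g[t]$-automorphism $x\otimes f(t)\mapsto x\otimes f(t-1)$ converts $\ol{L_q(\bs\pi)}$ into $L(\bs\pi)$ and the tensor factors accordingly; since the pullback is functorial, the resulting $\g[t]$-map $L(\bs\pi)\to L(\bs\pi_1)\otimes\cdots\otimes L(\bs\pi_r)$ still sends $v_{\ol{\bs\pi}}\mapsto v_{\ol{\bs\pi_1}}\otimes\cdots\otimes v_{\ol{\bs\pi_r}}$ as required.

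The main obstacle is the bookkeeping around $\A$-forms and the Hopf structure: one must verify that $L_\A(\bs\pi_1)\otimes_\A L_\A(\bs\pi_2)$ really is $U_\A(\tlie g)$-stable and that specialization $\otimes_\A\C$ commutes with tensor products of these particular lattices in a way that recovers the standard $\tlie g$-module structures on the $\ol{L_q(\bs\pi_i)}$. All of this is standard and is precisely what is done in \cite[Lemma 2.20 and proof of Proposition 3.21]{mou:res}, so rather than reprove it I would cite those results and present the argument at the level of functoriality of the three operations (taking $U_\A(\tlie g)$-submodule generated by the highest weight vector, specializing at $q=1$, and pulling back by the $t\mapsto t-1$ automorphism), each of which preserves the relevant maps and cyclic generators.
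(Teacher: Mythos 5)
Your argument is correct and is essentially the proof the paper relies on: the paper gives no proof of this lemma, citing \cite[Lemma 2.20 and proof of Proposition 3.21]{mou:res}, and your chain (one-dimensionality of the top weight space forcing $f(v_{\bs\pi})$ to be a multiple of $v_{\bs\pi_1}\otimes\cdots\otimes v_{\bs\pi_r}$, restriction to $\A$-forms via the Hopf subalgebra property of $U_\A(\tlie g)$, specialization at $q=1$, and pullback by $t\mapsto t-1$) is exactly that standard argument. One cosmetic remark: the reduction to $r=2$ is unnecessary, and an induction would in any case require maps into intermediate tensor products that the hypothesis does not supply; your steps apply verbatim to the $r$-fold tensor product, whose top weight space is again one-dimensional.
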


\section{Main theorem and proof}
For the remainder of the paper let $\g$ be of type $A_n$. Let $i\in I$ and $m\in \Z_{\ge 1}$. Given $\xi =
(\xi_1\ge\xi_2\ge\ldots\ge \xi_\ell)$ a partition of $m$, define
$$\bs\pi_{i,\xi} = \prod_{j=1}^{\ell} \bs\omega_{i,q^{\xi_j-1},\xi_j}\in \cal P_\Z^+.$$
One easily checks that the above presentation of $\bs\pi_{i,\xi}$ is its $q$-factorization. In particular, by Theorem \ref{t:sl2tensor}, we have
\begin{equation}\label{e:decomppart} L_q(\bs\pi_{i,\xi})\cong_{\uqt g} L_q(\bs\omega_{i,q^{\xi_1-1},\xi_1}) \otimes \cdots \otimes L_q(\bs\omega_{i,q^{\xi_\ell-1},\xi_\ell})\cong_{\uq g} V_q(\xi_1\omega_i)\otimes \cdots \otimes V_q(\xi_{\ell}\omega_i).
\end{equation}

We state the main result of the paper.

\begin{thm}\label{t:main}

Let $i\in I$, $m \in \Z_{\geq 1}$ and $\xi = (\xi_1\geq \cdots \ge \xi_{\ell})$ be a partition of $m$. Then
$$L(\bs\pi_{i,\xi})\cong_{\g[t]} V(\xi_1\omega_i)\ast \cdots \ast V(\xi_\ell\omega_i).$$
\end{thm}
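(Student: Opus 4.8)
The plan is to prove that the two modules have equal dimension and to construct a surjective homomorphism of $\g[t]$-modules between them; since both are finite dimensional, a surjection in either direction will automatically be an isomorphism. The equality of dimensions is immediate: by Theorem~\ref{c:Wprojlim}(1), $\dim L(\bs\pi_{i,\xi})=\dim L_q(\bs\pi_{i,\xi})$, which by \eqref{e:decomppart} equals $\prod_{j=1}^{\ell}\dim V_q(\xi_j\omega_i)=\prod_{j=1}^{\ell}\dim V(\xi_j\omega_i)$, and by \eqref{e:fusiontensor} this is $\dim\big(V(\xi_1\omega_i)\ast\cdots\ast V(\xi_\ell\omega_i)\big)$. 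Hence it is enough to build a surjective $\g[t]$-map $\psi\colon V(\xi_1\omega_i)\ast\cdots\ast V(\xi_\ell\omega_i)\to L(\bs\pi_{i,\xi})$ carrying the canonical degree-zero generator $w_\xi$ of the fusion product to $v_{\ol{\bs\pi_{i,\xi}}}$.

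To construct $\psi$ I would use a presentation of the fusion product by generators and relations. Two extreme cases are already settled by the results quoted above. If $\ell=1$, then Theorems~\ref{t:sl2tensor}(1) and \ref{c:Wprojlim}(2) give $L(\bs\pi_{i,(m)})\cong\ev_0 V(m\omega_i)$, the fusion product of the single module $V(m\omega_i)$. If $\xi=(1,\dots,1)$ with $k$ parts, then $\bs\pi_{i,\xi}=\bs\omega_{i,1}^{k}$, so $L_q(\bs\pi_{i,\xi})\cong L_q(\bs\omega_{i,1})^{\otimes k}$ is a tensor product of fundamental modules; Theorem~\ref{c:Wprojlim}(3) then shows that the relations \eqref{e:limrel} define $L(\bs\pi_{i,\xi})$, i.e.\ it is the local Weyl module of weight $k\omega_i$, which by the results on local Weyl modules recalled in the introduction is $V(\omega_i)\ast\cdots\ast V(\omega_i)$ ($k$ copies). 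For a general $\xi$ I would invoke \cite{Nao15} (and \cite{CV14} in the cases treated there): $V(\xi_1\omega_i)\ast\cdots\ast V(\xi_\ell\omega_i)$ is a generalized Demazure module of the form \eqref{e:gendemgt} attached to $\xi$ (with building blocks $D(r,r\omega_i)\cong\ev_0 V(r\omega_i)$, cf.\ Proposition~\ref{p:demrel} and Lemma~\ref{l:DequiGD}); it is therefore generated by $w_\xi$, of weight $m\omega_i$, subject to the relations \eqref{e:limrel} with $\wt(\bs\pi_{i,\xi})=m\omega_i$, together with an explicit finite family of relations $(x_\alpha^-\otimes t^{s})^{k_{\alpha,s}}w_\xi=0$ for $\alpha\in R^+$ with $\omega_i(h_\alpha)=1$, the exponents $k_{\alpha,s}$ being read off from the conjugate partition of $\xi$. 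This presentation is also what will yield Corollary~\ref{c:gradrel}.

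It then remains to verify that $v_{\ol{\bs\pi_{i,\xi}}}\in L(\bs\pi_{i,\xi})$ satisfies these relations. The relations \eqref{e:limrel} hold by Theorem~\ref{c:Wprojlim}. For the remaining ones I would work inside the $U_q(\tlie g)$-module $L_q(\bs\omega_{i,q^{\xi_1-1},\xi_1})\otimes\cdots\otimes L_q(\bs\omega_{i,q^{\xi_\ell-1},\xi_\ell})$, which by Theorem~\ref{t:sl2tensor} contains $L_q(\bs\pi_{i,\xi})$ with highest $\ell$-weight vector $v_{\bs\pi_{i,\xi}}=v_1\otimes\cdots\otimes v_\ell$: using the coproduct of $U_q(\tlie g)$ and the explicit action of the divided powers of the lowering generators of $U_\A(\tlie g)$ on the $v_j$, one evaluates $(x_\alpha^-\otimes t^{s})^{k_{\alpha,s}}$ on $v_{\bs\pi_{i,\xi}}$, specializes $q\to1$, and applies the shift $t\mapsto t-1$. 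The decisive feature is that the $q$-factorization of $\bs\pi_{i,\xi}$ consists of the $\ell$ strings $\{q^0,q^2,\dots,q^{2(\xi_j-1)}\}$; after specialization these collapse, and the combinatorics of $\xi$ reproduces exactly the exponents $k_{\alpha,s}$. This produces $\psi$, and by the dimension equality $\psi$ is an isomorphism. (One could instead attempt an induction on $\ell$, using the $\g[t]$-maps supplied by Lemma~\ref{l:limindmap} from the factorization $\bs\pi_{i,\xi}=\bs\pi_{i,\xi'}\cdot\bs\omega_{i,q^{\xi_\ell-1},\xi_\ell}$; but such maps land in $L(\bs\pi_{i,\xi'})\otimes\ev_0 V(\xi_\ell\omega_i)$ rather than in a fusion product, and controlling their kernels looks no easier than the direct verification.)

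The heart of the matter is this last part: identifying the precise defining relations of the fusion product for an arbitrary partition $\xi$ and then proving that $v_{\ol{\bs\pi_{i,\xi}}}$ satisfies them. This is a computation in the $\A$-form of a tensor product of Kirillov--Reshetikhin modules whose whole content is the translation of the Drinfeld-polynomial data of $\bs\pi_{i,\xi}$ into the partition data governing the fusion product; the book-keeping of divided powers and of the behaviour under $q\to1$ is the delicate point.
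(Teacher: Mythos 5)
Your overall frame (equality of dimensions plus a surjection in one direction) is the same as the paper's, and your dimension count is correct. But your surjection goes in the opposite direction from the paper's, and the step on which your whole argument rests is not actually carried out. You propose to present the fusion product by generators and relations (via \cite{Nao15}) and then to verify that $v_{\ol{\bs\pi_{i,\xi}}}$ satisfies those relations by computing with divided powers in the $\A$-form of $L_q(\bs\omega_{i,q^{\xi_1-1},\xi_1})\otimes\cdots\otimes L_q(\bs\omega_{i,q^{\xi_\ell-1},\xi_\ell})$ and specializing $q\to 1$. You yourself identify this as ``the heart of the matter,'' and it is precisely the part you leave as an assertion. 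The relevant relations are not of the simple form $(x_\alpha^-\otimes t^{s})^{k_{\alpha,s}}w_\xi=0$ that you posit: for a general partition they are the mixed relations $(x_{\alpha}^+\otimes t)^s(x_{\alpha}^-)^{r+s}v = 0$ subject to the inequalities $r+s\geq 1+kr+L_{k+1}$ appearing in Corollary \ref{c:gradrel}, and verifying these on the classical limit of a highest $\ell$-weight vector is a substantial computation (of the kind occupying large parts of \cite{mou:res} and \cite{na:dem}), not routine book-keeping. As written, the proposal is a plan with its central lemma missing.

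The paper avoids this computation entirely by mapping in the other direction, $L(\bs\pi_{i,\xi})\twoheadrightarrow D_i(\xi')$, where $D_i(\xi')$ is the generalized Demazure module that \cite{Nao15} identifies with the fusion product (Theorem \ref{p:giso}). The point you dismiss in your parenthetical remark is exactly the right one, applied to a better decomposition: instead of factoring $\bs\pi_{i,\xi}$ into its $\ell$ Kirillov--Reshetikhin factors, group them by the columns of $\xi$, i.e.\ write $\bs\pi_{i,\xi}=\prod_{j=1}^{s}\bs\pi_j$ with $\bs\pi_j=\bs\omega_{i,a_j,\ell_j}^{n_j}$ as in \eqref{e:pij}. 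The cyclicity criterion of Theorem \ref{t:sl2tensor}(ii) embeds $L_q(\bs\pi_{i,\xi})$ into $\bigotimes_j L_q(\bs\pi_j)$, Lemma \ref{l:limindmap} converts this into a $\g[t]$-map $L(\bs\pi_{i,\xi})\to\bigotimes_j L(\bs\pi_j)$ sending generator to the tensor product of generators, and each $L(\bs\pi_j)$ is identified with the Demazure module $D(\ell_j,\ell_j n_j\omega_i)$ by a further factorization into fundamental pieces plus a dimension count. The image of the composite map is then $U(\g[t])(v_1\otimes\cdots\otimes v_s)=D_i(\xi')$ by definition, so no relation needs to be checked by hand. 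If you want to salvage your direction of argument, you must either supply the divided-power computation in full or find another way to see that $v_{\ol{\bs\pi_{i,\xi}}}$ satisfies Naoi's relations; at present that is a genuine gap.
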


\begin{rem}By the definition of Kirillov-Reshetikhin modules and Theorem \ref{t:sl2tensor}(i), if
$$V = L_q(\bs\omega_{i,a_1,r_1})\otimes \cdots \otimes L_q(\bs\omega_{i,a_\ell, r_\ell}),$$
for some $(a_j,r_j)\in \C(q)^{\times}\times \Z_{\geq 1}$, then
$$V \cong_{U_q(\g)} V_q(r_1\omega_i)\otimes \cdots \otimes V_q(r_\ell\omega_i).$$
Therefore, setting $\xi$ to be the partition of $r_1+\cdots + r_\ell$ whose parts are $r_j$, $1\leq j \leq \ell$, \eqref{e:decomppart} implies that $L_q(\bs\pi_{i,\xi})$ is a representative of the equivalence class of $V$ which admits graded limit.
\end{rem}
The following is a straightforward consequence of Theorem
\ref{t:main} and \cite[Theorem 3.1]{Nao15}.
\begin{cor}\label{c:gradrel}
Let $\bs\pi\in \cal P_\Z^+$ satisfying the hypothesis of Theorem \ref{t:main} and set $L_j = \xi_j + \cdots +\xi_\ell$, $1\leq j\leq \ell$. Then $L(\bs\pi)$ is isomorphic to the $\g[t]$-module generated by a vector $v$ with relations
\begin{gather*}
\lie n^+[t]v=0, \ \ (h\otimes t^s)v = \delta_{s0}L_1\omega_i(h)v,\quad {\rm for}\ \ h\in \lie h, \ s\in \Z_{\ge 0}\\
x_{\alpha}^-\otimes \C[t]v =0 \ \ {\rm for}\ \alpha \in R^+ \ {\rm with} \ \omega_i(h_{\alpha})=0,\\
(x_{\alpha}^-)^{L_1+1}v = 0 \ \ {\rm for}\ \alpha \in R^+ \ {\rm with} \ \omega_i(h_{\alpha})=1,\\
(x_{\alpha}^+\otimes t)^s(x_{\alpha}^-)^{r+s}v = 0 \ \ {\rm for}\ \alpha \in R^+,\ r,s\in \Z_{\geq 1},\ {\rm with} \ \omega_i(h_{\alpha})=1,
\end{gather*} such that $r+s \geq 1 + kr +L_{k+1}, \ \textrm{for some}\ k\in \Z_{\geq 1}$. \qed
\end{cor}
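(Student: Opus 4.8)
The corollary is an application of Theorem \ref{t:main} combined with the presentation of fusion products of Kirillov--Reshetikhin modules obtained by Naoi, so the plan is to reduce the statement to \cite[Theorem 3.1]{Nao15} and then match relations. First I would use Theorem \ref{t:main} to replace $L(\bs\pi)$ by the fusion product $V(\xi_1\omega_i)\ast\cdots\ast V(\xi_\ell\omega_i)$. Each factor $V(\xi_j\omega_i)$ is, by Theorem \ref{t:sl2tensor}(i) together with Theorem \ref{c:Wprojlim}(ii), the graded limit $L(\bs\omega_{i,q^{\xi_j-1},\xi_j})\cong\ev_0 V(\xi_j\omega_i)$ of a single Kirillov--Reshetikhin module. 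Thus $L(\bs\pi)$ is precisely the fusion product of the graded limits of the KR-modules $L_q(\bs\omega_{i,q^{\xi_j-1},\xi_j})$, $1\le j\le\ell$, attached to the fixed node $i$, which is exactly the object to which Naoi's results apply.

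Next I would invoke \cite[Theorem 3.1]{Nao15}. In the simply-laced setting this result identifies the fusion product of KR-modules attached to a fixed node, with levels forming a partition $\xi_1\ge\cdots\ge\xi_\ell$, with a generalized Demazure module of the type $D(w_1\Lambda^1,\ldots,w_\ell\Lambda^\ell)$ recalled in \eqref{e:gendemgt}, and it exhibits a set of defining relations for it. The partition condition $\xi_1\ge\cdots\ge\xi_\ell$, which is built into the definition of $\bs\pi_{i,\xi}$, is exactly the ordering hypothesis required by Naoi's theorem (it is the analogue of the condition under which $D(\ell,\ell\lambda)\subseteq D(1,\lambda)^{\otimes\ell}$ in Lemma \ref{l:DequiGD}), so the theorem applies to our fusion product. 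I would also record, as part of Naoi's results, that the fusion product is independent of the fusion parameters, so that the presentation in the conclusion is unambiguous.

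The remaining step is to match Naoi's relations with those displayed in the corollary, and here the special features of type $A_n$ do the work. For every $\alpha\in R^+$ one has $\omega_i(h_\alpha)\in\{0,1\}$, namely the multiplicity of $\alpha_i$ in $\alpha$, so the relations split into exactly the two families listed. The highest weight is $L_1\omega_i$, since $L_1=\xi_1+\cdots+\xi_\ell=m$, which gives the Cartan relations and the integrability relation $(x_\alpha^-)^{L_1+1}v=0$ along each $\lie{sl}_2$-string with $\omega_i(h_\alpha)=1$. When $\omega_i(h_\alpha)=0$ the relevant component of the weight vanishes and forces $x_\alpha^-\otimes\C[t]$ to annihilate $v$. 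Finally, the Garland-type relations $(x_\alpha^+\otimes t)^s(x_\alpha^-)^{r+s}v=0$, of the same shape as in Proposition \ref{p:demrel}, are those that survive in the multi-level generalized Demazure presentation, and the numerical condition $r+s\ge 1+kr+L_{k+1}$ for some $k\in\Z_{\ge 1}$ is the precise bound under which they are imposed, with the partial sums $L_{k+1}=\xi_{k+1}+\cdots+\xi_\ell$ encoding the stacking of the levels $\xi_1,\ldots,\xi_\ell$.

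The main obstacle will be this last combinatorial translation. Naoi phrases his defining relations in terms of a flag of ordinary Demazure modules together with level data, and the principal task is to verify that, after specializing to a fixed fundamental weight in type $A$ and reading off the levels $\xi_1,\ldots,\xi_\ell$, his conditions collapse exactly to the single bound $r+s\ge 1+kr+L_{k+1}$ for some $k$, with no spurious or missing relations. Once this bookkeeping is carried out and combined with the parameter-independence of the fusion product, the isomorphism of $L(\bs\pi)$ with the module presented by the stated generators and relations follows at once from Theorem \ref{t:main}.
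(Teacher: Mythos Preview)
Your proposal is correct and follows exactly the paper's approach: the paper states the corollary as a ``straightforward consequence of Theorem~\ref{t:main} and \cite[Theorem 3.1]{Nao15}'' and gives no further argument. Your elaboration---identifying $L(\bs\pi_{i,\xi})$ with the fusion product via Theorem~\ref{t:main}, then reading off Naoi's presentation specialized to a single node in type $A$ (where $\omega_i(h_\alpha)\in\{0,1\}$)---is precisely the intended justification.
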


Before proving Theorem \ref{t:main} we need to set up some notation. We shall also write $\xi$ as the sequence $m_1^{b_1}m_2^{b_2}\ldots m_s^{b_s}$, such that $m_j\in\{\xi_1, \ldots,\xi_{\ell}\}$, $1\leq j \leq s$, $0<m_1<m_2<\ldots <m_s$, and $b_j>0$ is the number of times that the integer $m_j$ occurs in $\xi$.

Associated to the pair $(i,\xi)$ we can also consider a generalized Demazure module, which we denote by $D_i(\xi)$, defined by
$$D_i(\xi) = D(t_{-m_1\omega_{i^*}}(b_1\Lambda_0), t_{-m_2\omega_{i^*}}(b_2\Lambda_0), \ldots, t_{-m_s\omega_{i^*}}(b_s\Lambda_0)),$$
where $i^* = n+1-i$, for all $i\in I$. We recall that $w_0\omega_i = -\omega_{i^*}$, for all $i\in I$. Using \eqref{e:extaffaction}, we have
$$t_{-m_j\omega_{i^*}}(b_j\Lambda_0)\equiv -m_jb_j\omega_{i^*} + b_j\Lambda_0 \ \mod \C\delta, \quad j=1,\ldots, s,$$
and then, by \eqref{e:gendemgt}, we have
 $$D_i(\xi) =U(\g[t])(v_1\otimes \cdots\otimes v_s) \subseteq D(b_1,b_1m_1\omega_i)\otimes \cdots \otimes D(b_s,b_sm_s\omega_i),$$
where we write for short $v_j$ instead $v_{b_j,b_jm_j\omega_i}$, for all $1\leq j \leq s$.

Let $\xi'$ denote the conjugate partition of $\xi$, i.e., $\xi' = n_1^{\ell_1}n_2^{\ell_2}\ldots n_s^{\ell_s}$ is such that
\begin{equation}\label{e:xidual}
n_j = \sum_{k=s-j+1}^s b_k \quad {\rm and}\quad \ell_j = m_{s-j+1}-m_{s-j}, \quad\textrm{for all }\ j=1,\ldots, s,
\end{equation}
where $m_{0} = 0$.

Since $n_1<\cdots <n_s$, Lemma \ref{l:length} implies that $\ell(t_{-n_j\omega_{i^*}}) =
\sum_{k=1}^{j} \ell(t_{-(n_k-n_{k-1})\omega_{i^*}})$,
for all $1\leq j \leq s$, where $n_0=0$. Therefore, the
following theorem is straightforward from \cite[Proposition
2.7]{na:dem}, \cite[Theorem 2.1]{Nao15} and \cite[Remark
3.2]{Nao15}.
\begin{thm}\label{p:giso}
Let $i\in I$, $m\in \Z_{\geq 0}$ and $\xi = (\xi_1 \geq \cdots\ge \xi_\ell)$ be a partition of $m$. Then
$$D_i(\xi')\cong_{\g[t]} V(\xi_1\omega_i)\ast \cdots \ast V(\xi_\ell\omega_i).$$\qed
\end{thm}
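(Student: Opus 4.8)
The plan is to unwind $D_i(\xi')$ as a generalized Demazure module, to use the length additivity recorded just before the statement to enter the setting of Naoi's theorems, and then to match the fusion factors combinatorially through the double conjugation $(\xi')'=\xi$. First I would make the extremal data explicit: writing $\xi'=n_1^{\ell_1}\cdots n_s^{\ell_s}$ with $n_1<\cdots<n_s$ as in \eqref{e:xidual}, by definition
$$D_i(\xi')=D\bigl(t_{-n_1\omega_{i^*}}(\ell_1\Lambda_0),\ldots,t_{-n_s\omega_{i^*}}(\ell_s\Lambda_0)\bigr).$$
From \eqref{e:extaffaction} one has $t_{-n_j\omega_{i^*}}(\ell_j\Lambda_0)\equiv \ell_j\Lambda_0-\ell_j n_j\omega_{i^*}\bmod\C\delta$, so every extremal weight is nonpositive on each $h_{i'}$, $i'\in I$; hence \eqref{e:gendemgt} applies and, using $w_0\omega_{i^*}=-\omega_i$, the module is generated over $\g[t]$ by the tensor product of the extremal vectors inside $D(\ell_1,\ell_1 n_1\omega_i)\otimes\cdots\otimes D(\ell_s,\ell_s n_s\omega_i)$, the $j$-th factor being the level-$\ell_j$ Demazure module attached to the part $n_j$ of $\xi'$.

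Next I would invoke Naoi's results. The inequalities $n_1<\cdots<n_s$ make the weights $(n_k-n_{k-1})\omega_{i^*}$ dominant, which is why Lemma \ref{l:length} yields the length additivity $\ell(t_{-n_j\omega_{i^*}})=\sum_{k=1}^j\ell(t_{-(n_k-n_{k-1})\omega_{i^*}})$ recorded above. Under this hypothesis \cite[Proposition 2.7]{na:dem} guarantees that the chosen extremal vectors generate the generalized Demazure module in the expected way, and \cite[Theorem 2.1]{Nao15} identifies it, as a graded $\g[t]$-module, with a fusion product of modules of the form $V(a\omega_i)$. It then remains to name the fusion factors, which is the content of \cite[Remark 3.2]{Nao15}: they are indexed by the parts of the conjugate of the partition defining the generalized Demazure module. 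Applied to $\xi'$, whose conjugate is $(\xi')'=\xi$, this produces exactly $V(\xi_1\omega_i)\ast\cdots\ast V(\xi_\ell\omega_i)$, which is the claim. As a sanity check, both sides carry a cyclic generator of $\g$-weight $m\omega_i$, since a telescoping computation using \eqref{e:xidual} gives $\sum_{j=1}^s\ell_j n_j=\sum_{j=1}^s b_j m_j=m=\sum_{k=1}^\ell\xi_k$.

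The hard part is precisely this conjugate-partition bookkeeping together with the alignment of conventions: I must verify that reading the level data $(\ell_j,n_j)$ off $\xi'$ and then applying Naoi's level-to-fusion dictionary returns the factors indexed by $\xi$ rather than by $\xi'$, and that the ordering of the translations, the interchange of $\omega_i$ with $\omega_{i^*}$ forced by $w_0$, and the grading shifts built into \cite{Nao15} agree with the conventions fixed here. Once these are reconciled, the isomorphism follows directly from the cited statements, as anticipated by the remark preceding the theorem.
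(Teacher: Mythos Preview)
Your proposal is correct and follows exactly the paper's approach: the paper records the length additivity via Lemma~\ref{l:length} and then declares the theorem ``straightforward from \cite[Proposition 2.7]{na:dem}, \cite[Theorem 2.1]{Nao15} and \cite[Remark 3.2]{Nao15},'' which is precisely the chain of reductions you spell out. Your additional unpacking of the extremal weights, the $w_0$-twist, and the conjugate-partition bookkeeping simply makes explicit what the paper leaves to the reader.
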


Note that $$\dim (L(\bs\pi_{i,\xi})) = \dim (V(\xi_1)\ast \cdots \ast V(\xi_{\ell})),$$
by \eqref{e:fusiontensor}, \eqref{e:decomppart} and Theorem \ref{c:Wprojlim}(ii). Therefore, using Theorem \ref{p:giso}, to prove Theorem \ref{t:main} it suffices to prove the following:

\begin{prop}\label{p:surjmap}
Let $i\in I$, $m\in \Z_{\geq 1}$ and $\xi$ be a partition of $m$. There exists a surjective $\g[t]$-module homomorphism
$$L(\bs\pi_{i,\xi})\twoheadrightarrow D_i(\xi').$$
\end{prop}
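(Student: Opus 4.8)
The plan is to produce the surjection by exhibiting a generator of $D_i(\xi')$ that satisfies the defining relations \eqref{e:limrel} of $L(\bs\pi_{i,\xi})$ from Theorem \ref{c:Wprojlim}, and then invoke part (iii) of that theorem --- which applies since $L_q(\bs\pi_{i,\xi})$ is a tensor product of fundamental modules up to grouping, or more directly since \eqref{e:limrel} are defining relations for $L(\bs\pi_{i,\xi})$ by Theorem \ref{c:Wprojlim}(iii) together with the $q$-factorization in \eqref{e:decomppart}. Concretely, let $v = v_1\otimes\cdots\otimes v_s$ be the cyclic generator of $D_i(\xi')$ coming from \eqref{e:gendemgt}, where now the parts of $\xi'$ play the role of the $m_j$'s and $b_j$'s; I would first record that $v$ is a highest-weight vector for $\lie n^+[t]$, that $\lie h\otimes t\C[t]$ annihilates it, and that $(h\otimes 1)v = \wt(\bs\pi_{i,\xi})(h_i)\,\omega_i(h)\,v$ for $h\in\lie h$ --- here one checks the weight bookkeeping: $\sum_j \ell_j n_j = \sum_j b_j m_j = |\xi| = \wt(\bs\pi_{i,\xi})(h_i)$ by the conjugation formulas \eqref{e:xidual}. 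The one genuinely substantive relation to verify is the Serre-type relation $(x_i^-\otimes 1)^{N+1}v = 0$ with $N = \wt(\bs\pi_{i,\xi})(h_i) = \lambda(h_i)$ where $\lambda = \wt(\bs\pi_{i,\xi})$.

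For that last relation I would argue inside the tensor product of ordinary Demazure modules $D(b_j, b_jm_j\omega_i)$ (in the $\xi'$ presentation: $D(\ell_j, \ell_j n_j\omega_i)$), using Proposition \ref{p:demrel}. Each tensor factor $D(\ell, \ell n\omega_i)$ is, by Lemma \ref{l:DequiGD}, a submodule of $D(1,n\omega_i)^{\otimes \ell}$, and in the level-one Demazure module $D(1,\mu)$ the defining relations reduce to \eqref{eq1} alone; in particular $(x_i^-\otimes 1)^{\mu(h_i)+1}$ kills the generator. So on each factor $v_j$, the element $x_i^-\otimes 1$ acts nilpotently with the expected exponent, and an application of the standard $\lie{sl}_2$-tensor-product lemma (the one used to prove that a product of highest-weight-vector relations $(x^-)^{a_k+1}v_k=0$ forces $(x^-)^{(\sum a_k)+1}(v_1\otimes\cdots)=0$, cf.\ the argument behind Theorem \ref{c:Wprojlim} and \cite{CL06, cp:weyl}) gives $(x_i^-\otimes 1)^{N+1}(v_1\otimes\cdots\otimes v_s)=0$ once $\sum_j \ell_j n_j = N$, which is exactly the weight identity above. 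This produces a well-defined $\g[t]$-map $L(\bs\pi_{i,\xi})\to D_i(\xi')$ sending $v_{\ol{\bs\pi_{i,\xi}}}\mapsto v$; surjectivity is immediate because $v$ generates $D_i(\xi')$.

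An alternative --- and probably cleaner --- route to the same map is to use Lemma \ref{l:limindmap}: $D_i(\xi')$ is $\g[t]$-generated by $v_1\otimes\cdots\otimes v_s$, and each $v_j$, being the generator of an $\lie{sl}_{n+1}$-Demazure module attached to a fundamental-weight multiple, should itself be identifiable with $v_{\ol{\bs\pi}}$ for an appropriate single Kirillov--Reshetikhin $\ell$-weight; if one can match the factors so that $D(\ell_j,\ell_j n_j\omega_i)$ is a graded limit $L(\bs\omega_{i,\cdot,\cdot})$ (this is the content of \cite{ch:fer, cm:kr} identifying KR graded limits with Demazure modules in type $A$), then the hypothesis of Lemma \ref{l:limindmap} --- existence of a $U_q(\tlie g)$-map $L_q(\bs\pi_{i,\xi})\to \bigotimes_j L_q(\bs\omega_{i,q^{?},?})$ --- is supplied by Theorem \ref{t:sl2tensor}(2), and the lemma hands us the desired $\g[t]$-map directly, already known to be onto a set of cyclic generators. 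The main obstacle in either approach is the combinatorial/weight reconciliation between the $\xi$-indexing of $\bs\pi_{i,\xi}$ and the $\xi'$-indexing of $D_i(\xi')$: one must check carefully that the exponents and spectral parameters produced by the conjugate partition \eqref{e:xidual} are precisely the ones for which Proposition \ref{p:demrel}'s presentation collapses to the level-one relations on each factor and for which the $\lie{sl}_2$ exponent count closes up to $N+1$; everything else is formal, given Theorems \ref{c:Wprojlim}, \ref{p:giso} and \ref{t:sl2tensor}.
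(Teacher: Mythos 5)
Your first route has a genuine gap at its very first step: Theorem \ref{c:Wprojlim}(iii) only asserts that \eqref{e:limrel} are \emph{defining} relations of $L(\bs\pi)$ when $L_q(\bs\pi)$ is a tensor product of \emph{fundamental} modules $L_q(\bs\omega_{i_j,a_j})$, i.e.\ when $L_q(\bs\pi)$ is a local Weyl module. For a general partition $\xi$ the module $L_q(\bs\pi_{i,\xi})$ is a tensor product of Kirillov--Reshetikhin modules $L_q(\bs\omega_{i,a_j,\xi_j})$ with $\xi_j>1$, which are proper quotients of tensor products of fundamental modules, so the hypothesis fails ("up to grouping" does not rescue it). The module presented by \eqref{e:limrel} with highest weight $m\omega_i$ is the graded local Weyl module $D(1,m\omega_i)$; checking those relations on $v_1\otimes\cdots\otimes v_s$ therefore only produces the (already known) surjection $D(1,m\omega_i)\twoheadrightarrow D_i(\xi')$, and gives no reason why this map should factor through the quotient $L(\bs\pi_{i,\xi})$ of the Weyl module. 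That factorization is exactly the content of the proposition, so the first argument is circular where it matters.

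Your second route is the one the paper actually takes, but as sketched it misidentifies the tensor factors, and the pieces you defer are the whole proof. The factor $D(\ell_j,\ell_j n_j\omega_i)$ is \emph{not} the graded limit of a single KR module unless $n_j=1$ (a single $L_q(\bs\omega_{i,a,r})$ has graded limit $\ev_0V(r\omega_i)\cong D(r,r\omega_i)$, which has the wrong level and the wrong highest weight here); it is the graded limit of the $n_j$-fold tensor power $L_q(\bs\omega_{i,a_j,\ell_j})^{\otimes n_j}=L_q(\bs\pi_j)$, and proving $L(\bs\pi_j)\cong D(\ell_j,\ell_jn_j\omega_i)$ is the substantive claim. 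The paper does this by (a) choosing the explicit spectral parameters \eqref{e:pij} and verifying the cyclicity condition \eqref{e:cyclcond} (Proposition \ref{p:qinjmap}) so that Lemma \ref{l:limindmap} applies; (b) factoring each $\bs\pi_j$ into products $\bs\varpi_k$ of fundamental $\ell$-weights, where Theorem \ref{c:Wprojlim}(iii) \emph{does} apply and yields $L(\bs\varpi_k)\cong D(1,n_j\omega_i)$; (c) using Lemma \ref{l:DequiGD} to identify the image in $D(1,n_j\omega_i)^{\otimes\ell_j}$ with $D(\ell_j,\ell_jn_j\omega_i)$; and (d) closing with a dimension count via Theorem \ref{p:giso} applied to the rectangular partition $\psi=\ell_j^{n_j}$. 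You flag the "combinatorial/weight reconciliation" as the main obstacle and leave it unverified; since that reconciliation, the cyclicity check, and the dimension argument constitute essentially all of the proof, the proposal as written does not establish the proposition.
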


We devote the remainder of this section to prove Proposition
\ref{p:surjmap}. Write $\xi = m_1^{b_1}m_2^{b_2}\ldots m_s^{b_s}$,
$s\in \Z_{\geq 1}$, and its dual
$\xi'=n_1^{\ell_1}n_2^{\ell_2}\ldots n_s^{\ell_s}$. Set
\begin{equation}\label{e:pij}
\bs\pi_j = \bs\omega_{i,a_j,\ell_j}^{n_j},\quad {\rm where}\quad a_{j} = q^{\ell_j-1 + 2\sum_{k>j}\ell_k},\ j=1, \ldots, s,
\end{equation}
and observe that $$\bs\pi_{i,\xi} = \prod_{j=1}^s \bs\pi_j.$$

\begin{prop}\label{p:qinjmap}
Let $\bs\pi_j$, $1\leq j\leq s$, as in \eqref{e:pij}. Then $L_q(\bs\pi_{i,\xi})$ is the unique irreducible submodule of $L_q(\bs\pi_s)\otimes \cdots \otimes L_q(\bs\pi_2)\otimes L_q(\bs\pi_1)$.
\end{prop}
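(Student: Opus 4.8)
The plan is to deduce Proposition~\ref{p:qinjmap} directly from Theorem~\ref{t:sl2tensor}(2) by verifying the cyclicity condition \eqref{e:cyclcond} for the ordered list of KR-modules appearing in $L_q(\bs\pi_s)\otimes\cdots\otimes L_q(\bs\pi_1)$. First I would unwind the notation: each $\bs\pi_j = \bs\omega_{i,a_j,\ell_j}^{n_j}$ is itself a product of $n_j$ copies of the fundamental $\ell$-weight $\bs\omega_{i,a_j,\ell_j}$, so the tensor product in question is (after expanding) a tensor product of $n_1+\cdots+n_s$ Kirillov--Reshetikhin modules, all attached to the same node $i$, with $n_j$ of them having "length" $\ell_j$ and spectral parameter $a_j = q^{\ell_j - 1 + 2\sum_{k>j}\ell_k}$. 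Since all factors sit at the same node, the condition \eqref{e:cyclcond} with $i_r = i_s = i$ simplifies: for a pair of factors coming from blocks $j$ (later in the tensor order, i.e.\ "$s$") and $j'$ (earlier, i.e.\ "$r$") with $j < j'$, one needs $a_j / a_{j'} \neq q^{\ell_j + \ell_{j'} + 2 - 2p + 2k - 2i}$ for the relevant ranges of $p$ and $k$; and similarly for factors within the same block $j = j'$. I would then substitute the explicit values of the $a_j$ and reduce everything to an inequality between exponents of $q$, i.e.\ an arithmetic statement in $\Z$.

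The key computation is the following. Comparing two factors from blocks $j < j'$ (with block $j'$ placed to the left, block $j$ to the right, matching the order $L_q(\bs\pi_s)\otimes\cdots\otimes L_q(\bs\pi_1)$), one has $a_{j'}/a_j = q^{(\ell_{j'} - \ell_j) - 2\sum_{j \le k \le j'-1}\ell_k}$ — wait, more carefully $a_{j'}/a_j = q^{(\ell_{j'}-1) - (\ell_j - 1) + 2(\sum_{k>j'}\ell_k - \sum_{k>j}\ell_k)} = q^{(\ell_{j'}-\ell_j) - 2(\ell_j + \ell_{j+1} + \cdots + \ell_{j'-1})}$. The point is that this exponent is a fixed integer, and one must check it avoids the finite set of "bad" exponents prescribed by \eqref{e:cyclcond}. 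I expect the cleanest route is to observe that the bad exponents, for factors at node $i$ with lengths $\ell_j, \ell_{j'}$, lie in an interval symmetric around $\ell_j + \ell_{j'} - 2i + 2$ (roughly speaking, between $\ell_{j'}-\ell_j - 2(\min(i,\ell_j)-1) + \cdots$ after accounting for the $k$-range $\min\{i,\ell\} < k+1 \le \min\{2i, n+1\}$), while the actual exponent $a_{j'}/a_j$ has a large \emph{negative} contribution $-2(\ell_j + \cdots + \ell_{j'-1})$ from the gap between the blocks, which pushes it strictly below that interval. For the within-block comparison $j = j'$ one compares two copies of $\bs\omega_{i,a_j,\ell_j}$ at the \emph{same} spectral parameter, and here one uses the already-known statement \eqref{e:decomppart} together with Theorem~\ref{t:sl2tensor} (equal parameters of equal length automatically satisfy the $q$-factorization conditions since the exponent $0$ is excluded from the relevant range), or one simply invokes that $\bs\pi_j = \bs\omega_{i,a_j,\ell_j}^{n_j}$ is the $q$-factorization of a single irreducible KR-type module, so that block is already taken care of by the "In particular" clause of Theorem~\ref{t:sl2tensor}. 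Having verified \eqref{e:cyclcond} for all pairs $r > s$, Theorem~\ref{t:sl2tensor}(2) gives that $L_q(\prod_j \bs\pi_j) = L_q(\bs\pi_{i,\xi})$ is the unique irreducible submodule, which is the assertion.

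The main obstacle, I expect, is purely bookkeeping: matching the parametrization in \eqref{e:pij} against the inequality \eqref{e:cyclcond}, which is stated with a shifted and somewhat asymmetric range for the auxiliary indices $p$ and $k$ (note the appearance of $i_r, i_s$ and $n+1$ there), and getting the direction of the tensor order right so that the condition "$r > s$" is applied to the correct pair. One must be careful that the spectral parameters were chosen precisely so that consecutive blocks are "just barely" in general position from the left but the ordering $s, s-1, \ldots, 1$ is what makes the relevant differences land outside the bad set; reversing the order would fail. A secondary point to be careful about is the boundary case $i = i^*$ or small $\ell_j$, where the ranges in \eqref{e:cyclcond} may degenerate, but in those cases the bad set is empty or smaller, so the inequality is easier, not harder. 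Once the exponent arithmetic is laid out, the proof is a direct application of the cited theorem with no further input needed.
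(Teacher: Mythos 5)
Your proposal is correct and follows essentially the same route as the paper: reduce to Theorem \ref{t:sl2tensor}(ii), expand into individual Kirillov--Reshetikhin factors, and verify \eqref{e:cyclcond} by pure exponent arithmetic, the point being that the exponent of $a_{j'}/a_j$ (for a left block $j'$ over a right block $j<j'$) is negative while every excluded exponent is at least $2$, and the within-block ratios equal $q^0$ which is likewise never excluded. The only issue is a small slip in your final simplification --- the correct exponent is $(\ell_{j'}-\ell_j)-2(\ell_{j+1}+\cdots+\ell_{j'})=-\ell_j-\ell_{j'}-2(\ell_{j+1}+\cdots+\ell_{j'-1})$ rather than $(\ell_{j'}-\ell_j)-2(\ell_j+\cdots+\ell_{j'-1})$ --- but your intermediate expression is right and the conclusion is unaffected.
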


\begin{proof}
By Theorem \ref{t:sl2tensor}(ii), it suffices to show that
\begin{equation}\label{e:compcycl}\ell_k-1+ 2\sum_{t>k}\ell_t -(\ell_j-1+ 2\sum_{t>j}\ell_t) \neq \ell_j + \ell_k +2 -2p +2g -2i,
\end{equation} for all $1\leq j< k\leq s$, $1\leq p\leq \min\{\ell_j,\ell_k\}$, $i<g+1\leq \min\{2i,n+1\}$.
This is clear, since the left hand side of \eqref{e:compcycl} is a negative integer and the right hand side of \eqref{e:compcycl} is always a non-negative integer.
\end{proof}

\begin{proof}[Proof of Proposition \ref{p:surjmap}]
By Proposition \ref{p:qinjmap} and Lemma \ref{l:limindmap}, there exists a map
\begin{equation}\label{e:mapinlim}L(\bs\pi_{i,\xi})\to \bigotimes_{j=1}^s L(\bs\pi_{j}),
\end{equation}
mapping $v_{\overline{\bs\pi}_{i,\xi}}$ to $v_{\overline{\bs\pi}_{s}}\otimes \cdots \otimes v_{\overline{\bs\pi}_{1}}$. We claim that
$$L(\bs\pi_j)\cong_{\g[t]}D(\ell_j,\ell_jn_j\omega_i), \quad \textrm{for all} \  1\leq j \leq s.$$
Assuming the claim, by \eqref{e:mapinlim}, we have a $\g[t]$-module homomorphism
$$L(\bs\pi_{i,\xi}) \to D(\ell_1, \ell_1n_1\omega_i) \otimes \cdots \otimes D(\ell_s, \ell_sn_s\omega_i),$$
whose image is $D_i(\xi')$, as required.

For the claim, let $1\leq j \leq s$ and set
$$W = L_q(\bs\varpi_{\ell_j-1})\otimes\cdots\otimes L_q(\bs\varpi_{0}), \quad {\rm where}\quad \bs\varpi_{k} = (\bs\omega_{i,a_jq^{\ell_j-1-2k}})^{n_j},\ \ \ 0\leq k \leq \ell_j-1.$$
Observe that $\bs\pi_j = \prod_{k=0}^{\ell_j-1}\bs\varpi_k$ and, by
Theorem \ref{t:sl2tensor}(ii), $$L_q(\bs\varpi_k)\cong
L_q(\bs\omega_{i,a_jq^{\ell_j-1-2k}})^{\otimes n_j}, \ \ \textrm{for
all}\ \ 0\leq j \leq \ell_j-1.$$ Therefore, by Theorem \ref{c:Wprojlim} and
Proposition \ref{p:demrel}, we have
$$L(\bs\varpi_k)\cong_{\g[t]}D(1,n_j\omega_{i}), \quad \textrm{for all} \ 0\leq k \leq \ell_j-1.$$
Moreover, arguing as in the proof of Proposition \ref{p:qinjmap}, we obtain that $L_q(\bs\pi_j)$ is the unique irreducible submodule of $W$ and, hence, there exists a $\g[t]$-module homomorphism
$$L(\bs\pi_j)\to D(1,n_j\omega_{i})^{\otimes\ell_j},$$
whose image is $D(\ell_j, \ell_jn_j\omega_{i})$, by Lemma \ref{l:DequiGD}. To conclude that such surjective homomorphism is also injective,
it suffices to show that
\begin{equation}\label{e:dimeq}\dim L(\bs\pi_j) =\dim D(\ell_j,\ell_jn_j\omega_i).
\end{equation}
Setting the partition $\psi = \ell_j^{n_j}$, we have $\psi' = n_j^{\ell_j}$ and, by Theorem \ref{p:giso}, it follows that
$$D_i(\psi') = D(\ell_j,\ell_jn_j\omega_i) \cong V(\ell_j\omega_i) \ast \cdots \ast V(\ell_j\omega_i).$$
On the other hand, by Theorem \ref{t:sl2tensor},
$$L_q(\bs\pi_j)\cong_{U_q(\tlie g)} L_q(\bs\omega_{i,a_j,\ell_j})^{\otimes n_j}\cong_{U_q(\lie g)} V_q(\ell_j\omega_i)^{\otimes n_j}.$$
By Theorem \ref{c:Wprojlim}(i) and (ii), and using \eqref{e:fusiontensor} we conclude that \eqref{e:dimeq} holds, which finishes the proof.
\end{proof}

\begin{rem}
Assume $\g = \lie{sl}_2$ and let $I=\{1\}$. It is well known that if $\bs\pi = \prod_{j=1}^m\bs\omega_{1,a_j,r_j}$ is the $q$-factorization of $\bs\pi$, then
$$L_q(\bs\pi)\cong_{U_q(\tlie g)} \otimes_{j=1}^m L_q(\bs\omega_{1,a_j,r_j}).$$
In particular, given $m\in \Z_{\geq 0}$, each affinization of
$V_q(m\omega_1)$ must be of this form (see \cite[Lemma 6.5]{CH}).
Therefore, the equivalence classes of affinizations of
$V_q(m\omega_1)$ are in bijection with the set of all partitions
$\xi$ of $m$, and each of these classes has $L_q(\bs\pi_{1,\xi})$ as
its representative. In particular, Theorem \ref{t:main} implies that
the graded limit of each class of affinization of $V_q(m\omega_1)$
is isomorphic to a fusion product.

It is also known that this statement does not hold in general. For
instance, if $\g=\lie{sl}_4$ it can easily be proved that we have
three different classes of affinizations of $V_q(2\omega_2)$, with
representatives $L_q(\bs\omega_{2,q,2})$,
$L_q(\bs\omega_{2,1}\bs\omega_{2,q^4})$ and
$L_q(\bs\omega_{2,1}\bs\omega_{2,q^6})$, for example. By Theorems \ref{t:main} and \ref{p:giso}, we have
$$L(\bs\omega_{2,q,2}) \cong_{\g[t]} V(2\omega_2) \cong_{\g[t]} D(2,2\omega_1)\ \ {\rm and}\ \  L(\bs\omega_{2,1}\bs\omega_{2,q^6})\cong_{\g[t]} V(\omega_2)\ast V(\omega_2)\cong_{\g[t]} D(1,2\omega_2).$$
Using the results of \cite{NT} we have that
$L_q(\bs\omega_{2,1}\bs\omega_{2,q^4})$ is a module corresponding to
a skew Young diagram and
$$L_q(\bs\omega_{2,1}\bs\omega_{2,q^4})\cong_{U_q(\g)} V_q(2\omega_2)\oplus V_q(\omega_1 + \omega_3).$$
Moreover, since $D(2,2\omega_2)\cong_{\g[t]} U(\g[t])(v_{1,\omega_2}\otimes v_{1,\omega_2}) \subseteq D(1,\omega_2)\otimes D(1,\omega_2)$, by Lemma \ref{l:DequiGD}, we cannot have $L(\bs\omega_{2,1}\bs\omega_{2,q^4})$ being isomorphic to a fusion product and nor a generalized Demazure module, but rather a proper quotient of $D(1,2\omega_2)$.
\end{rem}

\bibliographystyle{amsplain}

\begin{thebibliography}{10}

\bibitem{bcm}
M.~Brito, V.~Chari and A.~Moura {\it Demazure modules of level two
and prime representations of  quantum affine $\lie{sl}_{n+1}$},
arXiv:1504.00178.

\bibitem{cha:minr2}
V.~Chari, {\it Minimal affinizations of representations of quantum
groups: the rank-2 case}, Publ. Res. Inst. Math. Sci. {\bf 31}
(1995), 873--911.

\bibitem{ch:fer}
\bysame, {\it On the fermionic formula and the Kirillov-Reshetikhin
conjecture}, Int. Math. Res. Notices {\bf 12} (2001), 629--654.

\bibitem{cha:braid}
\bysame, {\em Braid group actions and tensor products}, Int. Math.
Res. Notices (2002), 357--382.

\bibitem{CH}
V. Chari and D. Hernandez, {\it Beyond Kirillov--Reshetikhin
modules, in Quantum Affine Algebras, Extended Affine Lie Algebras,
and their Applications}, Contemp. Math. {\bf 506} (2010), 49--81.

\bibitem{CL06}
V. Chari and S. Loktev, {\it Weyl, Demazure and fusion modules for
the current algebra of $\lie{sl}_{r+1}$}, Adv. Math. {\bf 207}
(2006), 928--960.

\bibitem{cm:qblock}
V.~Chari and A.~Moura, {\em Characters and blocks for
finite-dimensional representations of quantum affine algebras}, Int.
Math. Res. Notices {\bf 5} (2005), 257--298.

\bibitem{cm:kr}
\bysame, {\em The restricted Kirillov-Reshetikhin modules for the
current and twisted current algebras}, Comm. Math. Phys. {\bf 266}
(2006), 431--454.

\bibitem{cp:qaa}
V.~Chari and A.~Pressley, {\it Quantum affine algebras}, Comm. Math.
Phys. {\bf 142} (1991), 261--283.

\bibitem{cp:small}
\bysame, {\em Small representations of quantum affine algebras},
Lett. Math. Phys. {\bf 30} (1994), 131--145.

\bibitem{cp:book}
\bysame, A guide to quantum groups, Cambridge University Press
(1994).

\bibitem{CPbanff}
\bysame, \emph{Quantum affine algebras and their representations}.
Representations of groups (Banff, AB, 1994), CMS Conf. Proc. {\bf
16} (1995), 59--78.

\bibitem{CPnato}
\bysame, \emph{Quantum affine algebras and integrable quantum
systems}. Quantum fields and quantum space time (1996), 245--263,
NATO Adv. Sci. Inst. Ser. B Phys. {\bf 364} (1997).

\bibitem{cp:weyl}
\bysame, {\em Weyl modules for classical and quantum affine
algebras}, Represent. Theory {\bf 5} (2001), 191--223.

\bibitem{CSVW}
V.~Chari, P.~Shereen, R.~Venkatesh and J.~Wand, \emph{A Steinberg
type decomposition theorem for higher level Demazure modules},
arXiv:1408.4090.

\bibitem{CV14}
V. Chari and R. Venkatesh, {\it Demazure modules, fusion products
and $Q$-systems}, Comm. Math. Phys. {\bf 333} (2015), no. 2,
799--830.

\bibitem{FF}
 B.~L. Feigin and E.~Feigin,
{\em $q$-characters of the tensor products in $\lie{sl}_2$-case},
Mosc. Math. J.  {\bf 2}  (2002),  no. 3, 567--588.

\bibitem{FL} B. Feigin and S. Loktev, {\em On Generalized Kostka
Polynomials and the Quantum Verlinde Rule}, Differential topology,
infinite--di\-men\-si\-onal Lie algebras, and applications, Amer.
Math. Soc. Transl. Ser. 2, Vol. {\bf 194} (1999), p. 61--79.

\bibitem{FoL}
G.~Fourier, P.~Littelmann. \emph{Weyl modules, Demazure modules,
KR-modules, crystals, fusion products, and limit constructions},
Adv. Math. \textbf{211} (2007), no.2, 566--593.

\bibitem{HL10}D.~Hernandez and B.~Leclerc, {\it Cluster algebras and quantum affine algebras}, Duke Math.
J. {\bf 154} (2010), 265--341.

\bibitem{HL13}
\bysame, {\it Monoidal categorifications of cluster algebras of type
$A$ and $D$}, Symmetries, Integrable Systems and Representations,
Springer Proceedings in Mathematics \& Statistics {\bf 40} (2013),
175--193.

\bibitem{Kac} V.~Kac. Infinite Dimensional Lie Algebras, Cambridge University Press (1983).

\bibitem{Kedem} R.~Kedem, {\it A pentagon of identities, graded tensor products, and the Kirillov--Reshetikhin conjecture}, New
trends in quantum integrable systems, World Sci. Publ. (2011),
173--193.

\bibitem{ln:ming2}
J.R.~Li and K. Naoi, {\it Graded limits of minimal affinizations
over the quantum loop algebra of type $G_2$}, arXiv:1503.02178.

\bibitem{Lus93} G. Lusztig. Introduction to Quantum groups, Progress in Mathematics {\bf 110}, Birkhäuser
Verlag, Boston, (1993).

\bibitem{mou:res} A.~Moura, {\em Restricted limits of minimal affinizations}, Pacific J. Math. {\bf 244} (2010), 359--397.

\bibitem{mope:mine6}
A. Moura and F. Pereira, {\em Graded limits of minimal affinizations
and beyond: the multiplicity free case for type $E_6$}, Algebra and
Discrete Mathematics {\bf 12} (2011), 69--115.

\bibitem{na:weyl} K.~Naoi, \emph{Weyl Modules, Demazure modules and finite crystals for non-simply laced type}, Adv. Math. \textbf{229} (2012), no.2, 875--934.

\bibitem{Nao12}
\bysame, {\it Fusion products of Kirillov-Reshetikhin modules and
the $X = M$ conjecture}, Adv. Math. {\bf 231} (2012), 1546--1571.

\bibitem{na:dem}
\bysame, {\it Demazure modules and graded limits of minimal
affinizations}, Represent. Theory {\bf 17} (2013), 524--556.

\bibitem{na:D}
\bysame, {\it Graded limits of minimal affinizations in type $D$},
SIGMA {\bf 10} (2014), 047, 20 pages.

\bibitem{Nao15}
\bysame, {\it Defining relations of fusion products and Schur
positivity}, arXiv:1504.00109.

\bibitem{NT}
M. Nazarov and V. Tarasov, {\it Representations of Yangians with
Gelfand-Zetlin bases}, J. Reine Angew. Math. {\bf 496} (1998),
181--212.

\bibitem{Rav15}
B.~Ravinder, {\it Generalized Demazure modules and fusion products},
arXiv:1504.01537.

\end{thebibliography}

\end{document}